\begin{document}

\title*{Geometry of rank 2 distributions with nonzero Wilczynski invariants and  affine control systems with one input}
\titlerunning { Rank 2 distributions with nonzero Wilczynski invariants and affine control systems}
% Use \titlerunning{Short Title} for an abbreviated version of
% your contribution title if the original one is too long
\author{Boris Doubrov and Igor Zelenko}
% Use \authorrunning{Short Title} for an abbreviated version of
% your contribution title if the original one is too long
\institute{Boris Doubrov
\at Belarussian State University, Nezavisimosti Ave.~4, Minsk 220030, Belarus;
\email {doubrov@islc.org} \and
Igor Zelenko
\at Department of Mathematics, Texas A$\&$M University,
   College Station, TX 77843-3368, USA; \email: {zelenko@math.tamu.edu}}
%{Name of First Author \at Name, Address of Institute, \email{name@email.address}
%\and Name of Second Author \at Name, Address of Institute \email{name@email.address}}
%
% Use the package "url.sty" to avoid
% problems with special characters
% used in your e-mail or web address
%

\maketitle

%\begin{keyword}
%Curves in Lagrange Grassmannians, moving frames, Young
%diagrams, sub-Riemanian structures, symplectic invariants, quivers
%\2MSC 53A55, 70G45
%\end{keyword}

%\usepackage[notcite,notref]{showkeys}
%\numberwithin{equation}{section}
%\numberwithin{theorem}{section}
%\numberwithin{proposition}{section}
%\numberwithin{lemma}{section}
%\numberwithin{corollary}{section}
%\numberwithin{definition}{section}
%\numberwithin{example}{section}
%\numberwithin{remark}{section}
%\numberwithin{note}{section}
%\newtheorem{theorem}{Theorem}
%\newtheorem{corollary}{Corollary}
%\newtheorem{lemma}{Lemma}
%\newtheorem{proposition}{Proposition}
%%\theoremstyle{definition}
%\newtheorem{definition}{Definition}
%%\theoremstyle{remark}
%\newtheorem{remark}{Remark}
%\newtheorem{example}{Example}
%\renewcommand{\theequation}{\thesection.\arabic{equation}}
%\renewcommand{\thelemma}{\thesection.\arabic{lemma}}
%\renewcommand{\thetheorem}{\thesection.\arabic{theorem}}
%\renewcommand{\theproposition}{\thesection.\arabic{proposition}}
%\renewcommand{\thecorollary}{\thesection.\arabic{corrollary}}
%\renewcommand{\thedefinition}{\thesection.\arabic{definition}}
%\renewcommand{\theremark}{\thesection.\arabic{remark}}
%\renewcommand{\theexample}{\thesection.\arabic{example}}
\renewcommand{\vec}{\overrightarrow}
\newcommand {\trans} {^{\,\mid\!\!\!\cap}}
\newcommand{\vf}{\varphi}
\newcommand{\mJ}{\mathcal J}
\newcommand{\e}{\varepsilon}
\newcommand{\dd}[1]{\frac{\partial}{\partial #1}}
\renewcommand{\theequation}{\thesection.\arabic{equation}}

\centerline{\emph{\large{Dedicated to Andrei Agrachev, our teacher and mentor,}}}
\centerline{\emph{\large{ on the occasion of his 60th birthday.}}}
\vskip .2in

\abstract{We demonstrate how the novel approach to the local geometry of structures of nonholonomic nature, originated by Andrei Agrachev, works in the following two situations: rank 2 distributions of maximal class in $\mathbb R^n$ with non-zero generalized Wilczynski invariants and  rank $2$ distributions
%in $\mathbb R^n$
of maximal class in $\mathbb R^n$ with additional structures such  as affine control system with one input spanning these distributions,  sub-(pseudo)Riemannian structures etc.
%or more general curve satisfying certain weak regularity assumption
% is chosen on each fiber of this distribution.
The common feature of these two situations is that each abnormal extremal (of the underlying rank $2$ distribution) possesses a distinguished parametrization. This fact allows one to construct   the canonical frame on a $(2n-3)$-dimensional bundle in both situations for arbitrary $n\geq 5$ . The moduli spaces of the most symmetric models for both situations are described as well. The relation of our results to the divergence equivalence of Lagrangians of higher order is given.}

 \keywords{Abnormal extremals, affine control systems, self-dual curves in projective space, Wilczynski invariants, canonical frames, $\mathfrak{sl}_2$-representations.}
 %symmetric models}
 %\subjclass[2000]
% {\bf 2010 Mathematics Subject Classification;} {58A30, 53A55, 34C14.}
 
\section{Introduction}
  About seventeen years ago Andrei Agrachev proposed the idea to study the local geometry of control systems and geometric structures on manifolds by studying the flow of extremals of optimal control problems naturally associated with these objects \cite{agrachev,agrgam1, jac1}. Originally  he considered situations when  one can assign a curve of Lagrangian subspaces of a linear symplectic space or, in other words, a curve in a Lagrangian Grassmannian to an extremal of these optimal control problems. This curve was called the Jacobi curve of this extremal, because it contains all information about the solutions of the Jacobi equations along it. Agrachev's constructions of Jacobi curves worked in particular for normal extremals of sub-Riemannian structures and abnormal extremals of rank 2 distributions. Similar idea can be used for abnormal extremals of distribution of any rank, resulting in more general curves of coisotropic subspaces in a linear symplectic space \cite{doubzel3, quasi}.

The key point is that the differential geometry of the original structure can be studied via differential geometry of such curves with respect to the action of the linear symplectic group. The latter problem is simpler in many respects than the original one. In particular, any symplectic invariants of the Jacobi curves produces the invariant of the original structure.
%More specifically he assigned to an extremal a special curve in a Lagrangian Grassmannian, i.e. a curve of  Lagrangian  subspaces in a linear symplectic space, called the Jacobi curve and any invariant of this curve with respect to the group of linear symplectic transformations is automatically an invariant of the original symplectic structure.

This idea proved to be very prolific. For the geometry of distributions, first it led to a new geometric-control interpretation of the classical Cartan invariant of rank $2$ distributions on a five dimensional manifold, relating it to the classical Wilczynski invariants of curves in projective spaces \cite{zelcart, zelvar, zelnur}. It also gave a new effective method of the calculation of the Cartan tensor  and  the generalization of the latter invariant to rank 2 distributions on manifolds of arbitrary dimensions. These new invariants are obtained from the Wilczynski invariants of curves in projective spaces, induced from the Jacobi curves by a series of osculations together with the operation of taking skew symmetric complements. They are called the \emph{generalized Wilczynski invariants of rank 2 distributions} (see section \ref{unparsec} for details).

 Later on, we used this approach for the construction of the canonical frames for rank 2 distributions on manifolds of arbitrary dimension \cite{doubzel1, doubzel2}, and, in combination with algebraic prolongation techniques in a spirit of N. Tanaka, for the construction of the canonical frames for distributions of  rank $3$ \cite{doubzel3} and recently of arbitrary rank \cite{ quasi,jacsymb} under very mild genericity assumptions called maximality of class.  Remarkably, these constructions are independent of the nilpotent approximation (the Tanaka symbol) of a distribution at a point and even independent of its small growth vector.
 This extends significantly the scope of distributions for which  the canonical frames  can be constructed explicitly and in an unified way compared to the Tanaka approach (\cite{tan1, mori, aleks, zeltan}).
%it helped to extend significantly the scope of the geometric structures of non-holonomic nature for which  differential invariants and even the canonical frames (the structures of absolute parallelism) can be constructed explicitly and in an unified way.

Perhaps the case of rank $2$ distributions of maximal class in $\mathbb R^n$ with $n>5$ provides the most illustrative example of the effectiveness of this approach, because the construction of the canonical frame in this case needs nothing more than some simple facts from the classical theory of curves in projective spaces such as the existence of the canonical projective structure on such curves, i.e. a special set of parametrizations defined up to a M\"{o}bius transformation (see section \ref{unparsec} below). The canonical frame for such distributions is constructed in a unified way on a bundle  of dimension  $2n-1$ and this dimension cannot be reduced, because there exists the unique, up to a local equivalence, rank 2 distribution of maximal class in $\mathbb R^n$ with the pseudo-group of local symmetries of dimension equal to $2n-1$.

For this most symmetric rank $2$ distribution of maximal class all generalized Wilczynski invariants are identically zero. However, if we assume that at least one generalized Wilczynski invariant does not vanish, then one would expect that the canonical frame can be constructed on a bundle of smaller dimension.
 In this case  the canonical parametrization, up to a shift, on abnormal extremals can be distinguished  instead of the canonical projective structure.

 Similarly, the canonical parametrization, up to a shift, on abnormal extremals can be distinguished in the case of a rank $2$ distribution $D$  with the additional structures
 defining a control system with one input satisfying certain regularity assumptions. A \emph{ control system with one input on a distribution $D$} is given by choosing  a  one-dimensional submanifold  $\mathcal V_q$ on each fiber $D(q)$ of the distribution $D$ (smoothly depending on $q$).

 \begin{definition}
 \label{reglinedef}
   The set $\mathcal V_q$ at a point $q$  is  called the \emph{set of admissible velocities of the control system at $q$.}  A line in $D(q)$ (through the origin) intersecting  the set $\mathcal V_q\backslash \{\text{the origin of }D(q)\}$ in a finite number of points is called a \emph{regular line} of the control system at the point $q$.
 \end{definition}

 \begin{definition}
 \label{regcontdef}
 We say that a control system with one input on a rank $2$ distribution $D$ is \emph{regular} if  for any point $q$ the sets of regular lines
 %the set of regular lines in $D(q)$ (through the origin) intersecting  the set $\mathcal V_q\backslash \{\text{the origin of }D(q)\}$ in a finite number of points
 is a nonempty open subset of the projectivization $\mathbb P D(q)$ .
 %for which the number of the points of intersection of the line representing  the point of this set in $D(q)$  with  is finite (or, equivalently,  the preimage of any point from this set under the canonical projection $\mathcal V_q \backslash \{\text{the origin of }D(q)\} \mapsto \mathbb P\mathcal D(q)$ is finite).
 %We say that such structure is a \emph{regular control system with one input on a rank $2$ distrbution $D$}.
 \end{definition}

 An important particular class of examples of such control systems  is when $\mathcal V_q$ is an affine line. In this case we get an \emph{affine
 control system with one input and with a non-zero drift}. Another examples are sub-(pseudo)Riemannian structures, when the curves are $\pm1$-level sets of non-degenerate quadrics. For  affine control systems with a non-zero drift and sub-Riemannian structures all lines in $D(q)$ are regular, while for sub-pseudo-Riemannian case all lines except the asymptotic lines of the quadrics are regular.
 %  and more generally
 %on it, for example, when  an affine line
 %or , more generally, a curve different from a straight line through the origin
 %is chosen on each fiber of this distribution, i.e. for affine control systems with one input and with a nonzero drift generating rank 2 distributions of maximal class.
%In the sequel we will consider affine control system with non-zero drift only.
% Note that the curve $\mathcal V_q$ at a point $q$  defines the set of \emph{admissible velocities of this control system at $q$.}
 %The latter structure will be called a \emph{control system on rank 2 distributions with one input and nonzero admissible velocities.} Note that the curve on a fiber of the distribution at a point $q$  defines the set of admissible velocities of this control system at $q$.

 The goal of this paper is to demonstrate the approach, originated by Andrei Agrachev, in these two simplified but still important situations: of rank $2$ distributions of maximal class with at least one nonvanishing generalized Wilczynski invariant and of regular control system with one input on rank $2$ distributions of maximal class.
 %For definiteness, throughout the paper we shall call rank 2 distribution of maximal class with non-vanishing generalized Wilczynski invariant
%a \emph{structure of type I} and
 We show that \emph{in both situations the canonical frame  can be constructed in a unified way on a bundle of dimension  $2n-3$ for all $n\geq 5$} (Theorem \ref{theor1par}, section \ref{canfrparsec} ).
We also describe in both situations all models with the pseudo-group of local symmetries of dimension $2n-3$. i.e. the most symmetric ones, among the considered class of objects (Theorems \ref{symmodthm0} and \ref{symmodthm1} below and their reformulation in Theorem \ref{symmodthm90} and \ref{symmodthm91}, section \ref{symmodsec}).

  The most symmetric models of two considered situations are closely related. In both situations they are not unique and depend on continuous parameters. Let us describe these models. Given a tuple of $n-3$ constants $(r_1,\ldots, r_{n-3})$ let $A_{(r_1,\ldots,r_{n-3})}$ be the following affine control system in $R^n$ taken with coordinates
  $(x,y_0,\ldots, y_{n-3},z)$:
  \begin{equation}
\label{affinemax1}
\dot q=X_1\bigl(q\bigr)+u X_2(q),
\end{equation}
where
\begin{align}
%\label{affinemax2}
X_1 &=\dd{x} + y_1\dd{y_0} + \dots + y_{n-3}\dd{y_{n-4}} +\nonumber\\
~&~\bigl(y_{n-3}^2+r_1 y_{n-4}^2 +r_2 y_{n-5}^2+\ldots r_{n-3} y_0^2\bigr)\dd{z},\label{X1}\\
X_2 &= \dd{y_{n-3}}\label{X2}.
\end{align}
and denote by $D_{(r_1,\ldots,r_{n-3})}$ the corresponding rank 2 distribution generated by the vector fields $X_1$ and $X_2$ as in \eqref{X1}-\eqref{X2}.
%We will say that the rank $2$ distribution $D_{(r_1,\ldots,r_{n-3})$ corresponds to the affine system $A_{(r_1,\ldots,r_{n-3})}$.
In the case of regular control systems we prove the following

% and one additional discrete parameters taking value in $\{-1,1\}$. These models are essentially the same for both situations.
%More precisely, in the case of affine control system we prove the following

\begin{theorem}
\label{symmodthm0}
%\begin{enumerate}
%\item
If a  regular  control systems
with one input
%and nonzero admissible velocity
on a rank 2 distribution of maximal class in $\mathbb R^n$ with $n\geq 5$ has a group of local symmetries of dimension $2n-3$, then it is locally equivalent to the system $A_{(r_1,\ldots,r_{n-3})}$
%following affine control system, written in coordinates $(x,y_0,\ldots, y_{n-3},z)$ in $\mathbb R^n$:
 for some constants $r_i\in\mathbb R$, $1\leq i\leq n-3$. The affine control systems $A_{(r_1,\ldots,r_{n-3})}$ corresponding to the different tuples $ (r_1,\ldots,r_{n-3})$  are not equivalent.
 %of the most symmetric, up to a local equivalence, affine control systems
%with one input and nonzero admissible velocity
%on rank 2 distribution of maximal class in $\mathbb R^n$ is parameterized by $n-3$ parameters:
%\begin{equation}
%\mathcal A_n=\{A_{r_1,\ldots,r_{n-3}}: (r_1,\ldots, r_{n-3})\in \mathbb R^{n-3}\}
%\end{equation}
\end{theorem}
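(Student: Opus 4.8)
The plan is to prove Theorem~\ref{symmodthm0} by combining the canonical frame of Theorem~\ref{theor1par} (the rigidity side) with a direct analysis of the normal forms \eqref{affinemax1}--\eqref{X2} (the realization side). First comes the dimension bound. By Theorem~\ref{theor1par}, a regular control system with one input on a rank $2$ distribution of maximal class in $\mathbb R^n$ carries a canonically --- hence equivariantly --- associated absolute parallelism on a $(2n-3)$-dimensional bundle $\mathcal P$. A local symmetry of the control system lifts to a local automorphism of $(\mathcal P,\text{frame})$, and this lift is faithful; since a local diffeomorphism preserving an absolute parallelism is determined by its value at one point, the pseudo-group of local symmetries has dimension at most $2n-3$. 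Equality forces the automorphism pseudo-group to act locally transitively on the $(2n-3)$-dimensional $\mathcal P$, so $\mathcal P$ is locally homogeneous and all structure functions of the canonical frame are constant; equivalently, all the essential invariants delivered by the construction are constant functions. Conversely, if all structure functions of the canonical frame are constant, then $\mathcal P$ is locally a Lie group with left-invariant coframe, so together with the upper bound $\dim\mathrm{Sym}=2n-3$.

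On the realization side I would check that for every tuple $(r_1,\ldots,r_{n-3})\in\mathbb R^{n-3}$ the pair $\bigl(D_{(r_1,\ldots,r_{n-3})},A_{(r_1,\ldots,r_{n-3})}\bigr)$ is an admissible object: from \eqref{X1}--\eqref{X2} one sees that $D_{(r_1,\ldots,r_{n-3})}$ has maximal class (the relevant computation being insensitive to the $r_i$), and that $A_{(r_1,\ldots,r_{n-3})}$ is a regular control system on it, being affine with nonzero drift $X_1$, so that by the remark in the introduction all lines of $D_{(r_1,\ldots,r_{n-3})}(q)$ are regular. Running the construction of Theorem~\ref{theor1par} on this model, all its structure functions come out constant (polynomial in the $r_i$), whence $\dim\mathrm{Sym}\bigl(A_{(r_1,\ldots,r_{n-3})}\bigr)=2n-3$ by the previous paragraph. (One may instead exhibit a $(2n-3)$-dimensional Lie algebra of infinitesimal symmetries directly, among which are the translations $\partial_x$ and $\partial_z$.) Thus each $A_{(r_1,\ldots,r_{n-3})}$ is maximally symmetric.

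Conversely, suppose a regular control system with one input on a rank $2$ distribution of maximal class in $\mathbb R^n$ has a $(2n-3)$-dimensional symmetry group. By the bound, all essential invariants of its canonical frame are constant, and one integrates the structure equations: in the distinguished parametrization of abnormal extremals the associated curve in projective space obeys a constant-coefficient linear ODE whose normalized form depends on exactly $n-3$ essential constants. Reconstructing the control system from such a curve --- realizing the last coordinate as the primitive of the corresponding quadratic expression in the jet variables --- produces precisely the normal form \eqref{affinemax1}--\eqref{X2}, with $(r_1,\ldots,r_{n-3})$ an invertible change of those constants; by the uniqueness built into Theorem~\ref{theor1par} the original system is locally equivalent to the corresponding $A_{(r_1,\ldots,r_{n-3})}$. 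For the last assertion --- pairwise inequivalence --- I would check that the map sending $(r_1,\ldots,r_{n-3})$ to the tuple of constant invariant values of $A_{(r_1,\ldots,r_{n-3})}$ is injective. This is where the connection to divergence equivalence of higher-order Lagrangians enters: two such systems can be locally equivalent only if the quadratic order-$(n-3)$ Lagrangians $\int\bigl((y^{(n-3)})^2+r_1(y^{(n-4)})^2+\cdots+r_{n-3}y^2\bigr)\,dx$ are divergence equivalent; since no reparametrization of the independent variable is available (the control picks out honest velocities, hence a scale) and divergence equivalence of quadratic Lagrangians of this shape with leading coefficient $1$ fixes the coefficients, the two tuples must coincide.

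The main obstacle is the converse (uniqueness) step: proving that constancy of all invariants forces the single normal form \eqref{affinemax1} rather than an a priori larger family, i.e. identifying the $n-3$ surviving constants in the integrated structure equations with the $r_i$ and carrying out the reconstruction of the whole control system --- distribution together with the admissible-velocity curve --- from the integrated Jacobi curve. Verifying maximal class of every $D_{(r_1,\ldots,r_{n-3})}$ and that the symmetry dimension is exactly $2n-3$ for all parameter values, with no jump for special tuples, is routine but must be done with care.
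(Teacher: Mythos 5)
Your overall strategy coincides with the paper's: the bound $\dim\mathrm{Sym}\le 2n-3$ comes from the absolute parallelism of Theorem~\ref{theor1par}, equality forces local homogeneity and hence constancy of all structure functions, and the uniqueness then reduces to showing that these constants are determined by $n-3$ essential parameters realized by the explicit models $A_{(r_1,\ldots,r_{n-3})}$. The paper organizes that uniqueness step by identifying the $n-3$ parameters with the symplectic curvatures $\rho_i$ through the self-dual normal form \eqref{selfeq} rewritten as \eqref{adselfeq}, and then verifying, via a chain of Jacobi-identity computations with the Euler field $e$, that every remaining structure constant of the frame $\{e,h,\varepsilon_1,\dots,\varepsilon_{2m},\eta\}$ is a universal polynomial in $r_1,\dots,r_m$; this is exactly the ``integration of the structure equations'' you defer, and it is where the real work lies, but your plan for it is the right one. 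The one place you genuinely diverge is the pairwise inequivalence of the $A_{(r_1,\ldots,r_{n-3})}$: you route it through divergence equivalence of the quadratic Lagrangians \eqref{Lagr}, whereas the paper gets it essentially for free from the reformulation as Theorem~\ref{symmodthm90}: the $i$th symplectic curvature is a scalar invariant of the control system (the canonical parametrization being part of the data, no reparametrization freedom remains), it equals $r_i$ on $A_{(r_1,\ldots,r_{n-3})}$, so distinct tuples yield distinct invariants. Your Lagrangian route can be made to work but is heavier: the identification of equivalence problems in \cite{var} is established for the underlying distributions, and you would additionally have to check that an equivalence of control systems, as opposed to distributions, kills the reparametrization $x\mapsto cx$ responsible for the scaling $\tilde r_i=c^{2i}r_i$ that does appear in the distribution case (Theorem~\ref{symmodthm1}); the symplectic-curvature argument encodes that distinction automatically.
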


In other words, the map $(r_1,\ldots, r_{n-3})\mapsto A_{(r_1,\ldots,r_{n-3})}$  identifies the space $\mathcal{A}_n$ of the most symmetric, up to a local equivalence, regular control systems
%with one input and nonzero admissible velocity
on rank 2 distributions of maximal class in $\mathbb R^n$ with $\mathbb R^{n-3}$.

Further, in the space $\mathcal A_n$ there is a special 1-foliation $\mathcal F$ (i.e a foliation by curves) with a singularity at the origin (under the identification of $\mathcal A_n$ with $\mathbb R^{n-3}$ given by Theorem \ref{symmodthm0}) such that the rank 2 distributions corresponding to the affine systems from the same leaf of $\mathcal F$ are locally equivalent and the rank 2 distributions corresponding to the affine systems from the different leaves of $\mathcal F$  are not equivalent. Among all leaves of $\mathcal F$  there is an exceptional leaf $\mathcal F(0)$ passing through the origin and the rank 2 distributions corresponding to the affine systems from this leaf are locally equivalent to the most symmetric rank 2 distribution in $\mathbb R^n$ of maximal class. It turns out that
the space of the most symmetric rank $2$ distributions of maximal class in $\mathbb R^n$  with nonzero Wilczynski invariants can be identified with the quotient space of $\mathcal A_n\backslash \mathcal F(0)$ by the foliation $\mathcal F$.

In more details,
%Now consider rank 2 distributions $D_{(\rho_1,\ldots,\rho_{n-3})}$ generated by the vector fields $X_1$ and $X_2$ as in \eqref{X_1}-\eqref{X_2}.
first, as shown in \cite{doubzel1, doubzel2}, the most symmetric rank 2 distribution in $R^n$ of maximal class with $n\geq 5$ is locally equivalent
to $D_{(0,\ldots,0)}$. It turns out that among all distributions of the type $D_{(r_1,\ldots,r_{n-3})}$ there is a one-parametric family of distribution which are locally equivalent to $D_{(0,\ldots,0)}$. To describe this family we need the following definition

\begin{definition}
\label{exceptdef}
The tuple of $m$ numbers $(r_1,\ldots,r_{m})$ is called exceptional if
the roots of the polynomial
\begin{equation}
\label{charpoly}
\lambda^{2m}+\sum_{i=1}^m (-1)^i r_i\lambda^{2(m-i)}
\end{equation}
constitute an arithmetic progression (with the zero sum in this case). Equivalently, $(r_1,\ldots,r_{m})$ is exceptional if
$r_i=\alpha_{m,i}\left(\frac{r_1}{\alpha_{m,1}}\right)^i$, $1\leq i\leq m$, where the constants $\alpha_{m,i}$, $1\leq i\leq m$, satisfy the following identity
\begin{equation}
\label{c_iprop}
x^{2m}+\sum_{i=1}^m (-1)^i\alpha_{m,i} x^{2(m-i)}=\prod_{i=1}^{m}\bigl(x^2-(2i-1)^2\bigr).
\end{equation}
\end{definition}

It turns out (Corollary  \ref{D0cor}, subsection \ref{wilczsubsec})
%, Theorem \ref{frtheor} below and results of \cite{var}, relating the equivalence of rank $2$ distributions with the equivalence of Lagrangians and their Euler-Lagrange equations, Theorem 2.1 and 4.1 there)
that \emph{the distribution $D_{(r_1,\ldots,r_{n-3})}$ is locally equivalent to the distribution $D_{(0,\ldots,0)}$ (or, equivalently, has the algebra of infinitesimal symmetries of the maximal possible dimension among all rank $2$ distributions of maximal class in $\mathbb R^n$) if and only if the tuple
$(r_1,\ldots,r_{n-3})$ is exceptional in the sense of Definition \ref{exceptdef}}.  As far as we know, this simple but nice observation was not mentioned in the existing literature. This observation is based on the following simple fact from the representation theory of the Lie algebra $\mathfrak{sl}_2$: the spectrum of any element in the image of an irreducible representation of $\mathfrak{sl}_2$ forms an arithmetic progression (see Proposition \ref{Wilczexceptrem}, subsection \ref{wilczsubsec}).
% As far as we know, this observation is new.
%(its proof is sketch in subsection \ref{wilczsubsec}).

The analog of Theorem \ref{symmodthm0} for rank $2$ distributions of maximal class in $\mathbb R^n$  with nonzero Wilczynski invariants can be formulated as follows
  %any symmetric models in the first situation (of distributions) corresponds to a one-parametric family of non-equivalent maximally symmetric models in the second situation (of affine control systems) .  The space of the most symmetric models of rank $2$ distributions of maximal class in $\mathbb R^n$  with nonzero Wilczynski invariants can be identified with the space of orbits of the space $\mathcal A_n$ with respect to the natural action of the one-dimension group $R^*$ (i.e. the group of nonzero real numbers under multiplication) as described by \eqref{rhotrans} below.

\begin{theorem}
\label{symmodthm1}
%The most symmetric rank 2 distribution in $R^n$ of maximal class with $n\geq 5$ is locally equivalent to the distributions $D_{(r_1,\ldot spanned by the %vector fields
%$X_1$ and $X_2$ as in \eqref{X1}-\eqref{X2} with all $r_i$, $0\leq i\leq n-3$ equal to zero.
 If a rank $2$ distribution in $R^n$ of maximal class, $n\geq 5$, with at least one nowhere vanishing generalized Wilczynski invariant has a group of local symmetries of dimension $2n-3$, then it is locally equivalent to the distribution $D_{(r_1,\ldots,r_{n-3})}$ where the tuple $(r_1,\ldots,r_{n-3})$ is not exceptional in the sense of Definition \ref{exceptdef}.
 %spanned by the vector fields $X_1$ and $X_2$ as in \eqref{X1}-\eqref{X2} with at least one nowhere vanishing  $r_i$.
 Two distributions $D_{(r_1,\ldots,r_{n-3})}$ and $D_{(\tilde r_1,\ldots,\tilde r_{n-3})}$ are locally equivalent if and only if
\begin{equation}
\label{rhotrans}
\text{\rm{there exists }} c\neq 0 \text{\rm{ such that }} \tilde r_i=c^{2i} r_i, \quad 1\leq i\leq n-3.
\end{equation}
\end{theorem}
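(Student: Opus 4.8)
The plan is to combine the canonical frame of Theorem~\ref{theor1par} with the classification of the most symmetric regular control systems (Theorem~\ref{symmodthm0}) and the description of exceptional tuples (Corollary~\ref{D0cor}). If $D$ has a nowhere vanishing generalized Wilczynski invariant, then the distinguished parametrization (up to a shift) of the abnormal extremals of $D$ is determined by $D$ alone, so any local symmetry of $D$ automatically preserves the pair consisting of $D$ and this parametrization, hence preserves the canonical frame of Theorem~\ref{theor1par} on the bundle $P$ with $\dim P=2n-3$. A structure carrying a canonical frame on a $d$-dimensional manifold has symmetry algebra of dimension at most $d$; the nowhere vanishing Wilczynski invariant is precisely what rigidifies the residual one-parameter rescaling of the parameter (a symmetry would have to preserve this invariant, leaving no continuous freedom), so that $\dim\mathrm{Sym}(D)\le 2n-3$, with equality exactly when all structure functions of the canonical frame — in particular all generalized Wilczynski invariants — are constant, i.e. $P$ is locally a Lie group on which the frame is left-invariant.

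Assume now $\dim\mathrm{Sym}(D)=2n-3$. Imposing constancy in the structure equations of Section~\ref{canfrparsec} and using the identities $d^2\omega^i=0$ reduces the classification of the flat models to a system of polynomial equations on the constant values of the generalized Wilczynski invariants; this is essentially the computation carried out for regular control systems in the proof of Theorem~\ref{symmodthm0} (the one-dimensional submanifolds $\mathcal V_q$ of the control system do not enter these equations), and its solutions correspond exactly to the distributions $D_{(r_1,\dots,r_{n-3})}$. By Corollary~\ref{D0cor} the tuple $(r_1,\dots,r_{n-3})$ is exceptional if and only if $D_{(r_1,\dots,r_{n-3})}$ is locally equivalent to $D_{(0,\dots,0)}$, whose generalized Wilczynski invariants all vanish; since by hypothesis one of the invariants of $D$ is nowhere zero, the tuple must be non-exceptional. (Conversely, for a non-exceptional tuple $D_{(r_1,\dots,r_{n-3})}$ is flat and not equivalent to $D_{(0,\dots,0)}$, hence — $D_{(0,\dots,0)}$ being the unique maximal-class distribution with vanishing generalized Wilczynski invariants \cite{doubzel1,doubzel2} — carries a nowhere vanishing generalized Wilczynski invariant and, by the preceding paragraph, has symmetry algebra of dimension exactly $2n-3$.) This proves the first assertion.

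For the equivalence criterion \eqref{rhotrans}, the ``if'' direction follows by checking directly that the dilation
\[
\psi_c\colon (x,y_0,y_1,\dots,y_{n-4},y_{n-3},z)\longmapsto\bigl(c^{-1}x,\ c^{-(n-3)}y_0,\ c^{-(n-4)}y_1,\ \dots,\ c^{-1}y_{n-4},\ y_{n-3},\ c^{-1}z\bigr)
\]
sends $X_1$ modulo $X_2$ for the tuple $(r_1,\dots,r_{n-3})$ to $c^{-1}$ times $X_1$ for the tuple $(c^2r_1,c^4r_2,\dots,c^{2(n-3)}r_{n-3})$ and preserves the line field spanned by $X_2$, so that $(\psi_c)_*D_{(r_1,\dots,r_{n-3})}=D_{(c^2r_1,\dots,c^{2(n-3)}r_{n-3})}$; the exponents are chosen precisely so that $r_i$ acquires the factor $c^{2i}$. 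For the ``only if'' direction, any local equivalence between $D_{(r_1,\dots,r_{n-3})}$ and $D_{(\tilde r_1,\dots,\tilde r_{n-3})}$ carries canonical frame to canonical frame; since both distributions carry a nowhere vanishing generalized Wilczynski invariant, the only freedom left is the residual rescaling $t\mapsto at+b$ of the distinguished parameter on the abnormal extremals, under which the generalized Wilczynski invariants transform with definite weights. By the computation of Subsection~\ref{wilczsubsec} (which rests on the $\mathfrak{sl}_2$-structure, cf. Proposition~\ref{Wilczexceptrem}), the $i$-th of these constants equals, for $D_{(r_1,\dots,r_{n-3})}$, a universal nonzero multiple of $r_i$ of weight $2i$ in $a$; matching the constants forces $\tilde r_i=c^{2i}r_i$ with $c=a$, which is \eqref{rhotrans}.

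The main obstacle is the classification of the flat models: one must make the structure equations of the canonical frame explicit enough to solve the constancy-plus-$d^2=0$ system and to show that its solution set consists precisely of the distributions $D_{(r_1,\dots,r_{n-3})}$, i.e. that discarding the control-system data $\mathcal V_q$ does not enlarge the moduli beyond what Theorem~\ref{symmodthm0} yields. The representation-theoretic input of Proposition~\ref{Wilczexceptrem} is what renders this system tractable and, simultaneously, pins down the exceptional tuples; once it is in place, the equivalence criterion is the weight bookkeeping for $\psi_c$ indicated above.
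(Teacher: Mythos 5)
Your overall strategy coincides with the paper's: bound the symmetry dimension by $2n-3$ via the canonical frame of Theorem~\ref{theor1par}, observe that equality forces all structure functions to be constant, classify the resulting flat models by the constant values of the curvature-type invariants, rule out exceptional tuples via Corollary~\ref{D0cor}, and obtain \eqref{rhotrans} by weight bookkeeping. Your explicit dilation $\psi_c$ is a nice, correct, concrete verification of the ``if'' direction of \eqref{rhotrans} that the paper does not spell out. However, there is a genuine gap at the heart of the argument: the classification of the flat models is asserted (``essentially the computation carried out for regular control systems in the proof of Theorem~\ref{symmodthm0}'') rather than performed, and you yourself flag it as ``the main obstacle.'' In the paper, Theorems~\ref{symmodthm0} and~\ref{symmodthm1} are \emph{not} independent inputs to one another; they are proved simultaneously through the reformulations in Theorems~\ref{symmodthm90}--\ref{symmodthm91}, and the actual work is the sequence of Jacobi-identity computations showing that $[e,\varepsilon_i]=-\tfrac12\varepsilon_i$, $[h,\varepsilon_{2m}]=\sum_{i}(-1)^{i+1}r_i\varepsilon_{2(m-i)}$ (from \eqref{adselfeq}), and $[\varepsilon_i,\varepsilon_j]=d_{ij}\eta$ with $d_{ij}$ universal polynomials in the $r_k$ (Remark~\ref{symprodrem}), so that the entire structure-constant table is pinned down by the tuple $(r_1,\ldots,r_{n-3})$; existence is then a direct check on the models $D_{(r_1,\ldots,r_{n-3})}$. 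Without some version of this computation your proof does not establish that the solution set of the ``constancy'' system is exactly $\{D_{(r_1,\ldots,r_{n-3})}\}$.

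A second, more localized error sits in your ``only if'' direction of \eqref{rhotrans}: you claim that the $i$-th generalized Wilczynski invariant of $D_{(r_1,\ldots,r_{n-3})}$ is a universal nonzero multiple of $r_i$. That is false --- for a nonzero exceptional tuple all Wilczynski invariants vanish identically (Proposition~\ref{Wilczexceptrem}) while the $r_i$ do not, and in general $\mathcal W_{2i}$ is a polynomial in $r_1,\ldots,r_i$. The invariants that literally equal $r_i$ (in the $x$-parametrization) and rescale with weight $2i$ are the symplectic curvatures $\rho_i$ of Section~\ref{sympsec}, and the precise statement you need --- that within each rescaling class $(c^2r_1,\ldots,c^{2m}r_m)$ there is exactly one tuple realized by the canonical (Wilczynski-normalized) parametrization --- is the paper's Lemma~\ref{comptlem}. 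Relatedly, the rescaling freedom $t\mapsto at+b$ does not survive in the canonical frame (the canonical parametrization is fixed up to shift once $\mathcal W_{2i_0}\equiv\pm1$ is imposed); it lives in the discrepancy between the model's $x$-parametrization and the canonical one. Replacing your Wilczynski-invariant weight argument by the symplectic-curvature statement of Lemma~\ref{comptlem} repairs this step.
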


The aforementioned foliation $\mathcal F$ on the space $\mathcal A_n$ can be described as follows: the exceptional leaf $\mathcal F(0)$ consists of the exceptional tuples  in the sense of Definition \ref{exceptdef}; other leaves are exactly the equivalence classes on  $\mathcal A_n\backslash \mathcal F(0)$ with respect to the equivalence relation given by \eqref{rhotrans}. Note that the exceptional leaf ${\mathcal F}(0)$ is also the union of three equivalence classes with respect to the same equivalence relation, one of which is the origin.

The above symmetric models of distributions are associated with the following underdetermined ordinary differential equations (Monge equations)
%In general, consider the following underdetermined ordinary differential equation
%\begin{equation}
%\label{under}
%\[
%z'(x)=F\bigl(x,y(x),\ldots, y^{(n-3)}(x),z(x)\bigr),
%\quad
%r=n-3,
%\]
%\end{equation}
%for two functions $y(x)$ and $z(x)$. Setting $y_i=y^{(i)}$, $0\leq i\leq n-3$,
%with each such equation one can associate the rank $2$ distribution on
%${\mathbb R}^n$ with coordinates $(x,y_0,\ldots, y_{n-3},z)$ defined as the
%annihilator of the following $n-2$ 1-forms:
%\begin{equation}
%\label{Pfaff}
%\begin{aligned}
%~&dy_i-y_{i+1} dx , \,\,0\leq i\leq n-4,\\
% ~&dz-F(x,y_0,\ldots, y_{n-3},z)dx.
%\end{aligned}
%\end{equation}
%Then first the most symmetric rank 2 distribution in $R^n$ of maximal class with $n\geq 5$ is associated with the underdetermined equation
%\begin{equation}
%\label{mostsymm}
%z^{\,\prime}=\bigl(y^{(n-3)}\bigr)^2
%\end{equation}
%Note that in the case of $n=3$ and $n=4$ all generic germs of rank 2 distribution are
%equivalent to the distribution, associated with the underdetermined equation \eqref{mostsymm}.
%%$z'(x)=y(x)$ and $z'(x)=y'(x)$
%(Darboux and Engel models respectively).
%Now, the most symmetric models among rank 2 distributions in $R^n$, $n\geq 5$, of maximal class with at least one nonvanishing generalized Wilczynski invariant are associated with the underdetermined equation
\begin{equation}
\label{under}
z^{\,\prime}(x)
=\bigl(y^{(n-3)}(x)\bigr)^2+r_1 \bigl(y^{(n-4)}(x)\bigr)^2 +\ldots r_{n-3} y^2(x),
\end{equation}
%where $\rho_i$, $1\leq i\leq n-3$,
%are simultaneously non-vanishing constants. Two distributions associated with \eqref{subsymm} with the tuples $(\rho_1,\ldots,\rho_{n-3})$ and $(\tilde \rho_1,\ldots,\tilde\rho_{n-3})$, respectively, are equivalent if and only if there exist nonzero constant $c$ such that
%\begin{equation}
%\label{rhotrans}
%\tilde\rho_i=c^{2i} \rho_i, \quad 1\leq i\leq n-3.
%\end{equation}
and also with  the Lagrangians
\begin{equation}
\label{Lagr}
\int\Bigr(\bigl(y^{(n-3)}(x)\bigr)^2+r_1 \bigl(y^{(n-4)}(x)\bigr)^2 +\ldots r_{n-3} y^2(x)\Bigl)\,dx,
\end{equation}
which are quadratic with respect to the derivatives and have constant coefficients.
It is well known (see \cite{olver}, discussion in the beginning of p. 242 there) that these Lagrangians are the most symmetric ones
%It is worth to mention that the models \eqref{mostsymm}-\eqref{subsymm} can be  associated with the most symmetric models
among all Lagrangians $$\displaystyle{\int F\bigl(x,y(x),\ldots, y^{(n-3)}(x)\bigr)\, dx}$$ with $F_{y^{(n-3)} y^{(n-3)}}\neq 0$, up to a contact transformation and modulo divergence.
%(see also \cite{var}).
Our results here together with the relation of this equivalence problem and its modification to the equivalence of very special rank 2 distributions of maximal class studied in \cite{var} give an alternative proof of this fact.
%If the function $F$ in \eqref{Pfaff} does not depend on $z$, then we can assign to the distribution defined by \eqref{Pfaff} the Lagrangian $\int  F\bigl(x,y(x),\ldots, y^{(n-3)}(x)\bigr)\,dx$.
Note that for these most symmetric Lagrangians  the Euler-Lagrange equation is a linear equation with constant coefficients (such that its characteristic polynomial coincides with the polynomial in \eqref{charpoly} where $m=n-3$).

Note that for rank $2$ distributions in $\mathbb R^5$ the notion of maximality of class coincides with the condition that the small growth vector is equal to $(2,3,5)$. As was shown by \`{E}lie Cartan in his famous paper \cite{cart10} the most symmetric distribution among all distributions with small growth vector $(2,3,5)$ has 14 dimensional algebra of infinitesimal symmetries and this distribution is the unique distribution with identically zero Cartan invariant (which coincides with the (unique in this case) generalized Wilczynski invariant). Therefore in this case our approach gives the unified construction of the canonical frame on a $7$-dimensional bundle for all distributions with the small growth vector $(2,3,5)$ except the most symmetric one, which provides also an alternative way to get the Cartan classification of submaximal symmetric models for these distributions \cite{cart10} [chapter IX]. Note also that in the case $n=5$ the construction of the canonical frame was already done in the PhD thesis of the second author \cite[subsection 10.5]{phd}. It is worth to mention that for $n=5$ an alternative way to describe these submaximal models is via the family of underdetermined ODEs  (Monge equations) $z'(x)=(y''(x))^\alpha$ with $\alpha\notin\{-1, \frac{1}{3},\frac{2}{3},2\}$
(see, for example, \cite{nur}[Example 6],\cite{krug}[section 5]).

%In the case $n>5$ the question whether the most symmetric distribution of maximal class is the only distribution within this class with vanishing generalized Wilczynski invariants remains open. Note that in the case $n=6$ it was shown recently in \cite{krug} that there are no distributions with the small growth vector $(2,3,5,6)$ (equivalently, rank $2$ distribution of maximal class in $\mathbb R^6$) having $10$-dimensional  pseudo-group of local symmetries. This implies that the obtained models of rank 2 distributions with $9$-dimensional   pseudo-group of local symmetries are submaximal symmetric models. However, due to the first sentence of this paragraph, we cannot claim that
%these are the only submaximal symmetric models.

Finally note that regarding regular control systems with one input on a rank 2 distribution of maximal class  the obtained models \eqref{affinemax1}-\eqref{X2}  are maximally symmetric. Affine control systems with one input were considered also in \cite{zelclass}, but the genericity assumptions imposed  there are much stronger than our genericity assumptions here.

The paper is organized as follows. The main results are given in sections \ref{canfrparsec} and \ref{symmodsec} (Theorem \ref{theor1par} and   Theorems \ref{symmodthm90}-\ref{symmodthm91}, which are reformulations of Theorems \ref{symmodthm0}-\ref{symmodthm1} above). Sections \ref{abnsec}-\ref{londonsec} are preparatory for section \ref{canfrparsec}, section \ref{sympsec} is preparatory for section \ref{symmodsec}.  In sections \ref{abnsec}-\ref{unparsec} we list all necessary facts about abnormal extremals of rank 2 distributions, their Jacobi curves and the invariants of unparametrized curves in projective spaces. The details can be found in \cite{doubzel1, zelvar, zelrigid}. In section \ref{londonsec} we summarize the main results of \cite{doubzel1, doubzel2} about canonical frames for rank 2 distributions of maximal class in order to compare them with the analogous results of sections  \ref{canfrparsec} and \ref{symmodsec}. In section \ref{sympsec} we list all necessary facts about  the invariants of parametrized self-dual curves in projective spaces.

 %Usually to construct a canonical frame for a class of geometric structure one chooses the most simple homogeneous model among all structures from this class and try to imitate  the construction of the canonical frame for all structures with this characteristic by the construction of such frame for this simplest model.
 %The resulting  frame
%Once the canonical frame for a geometric structures is constructed on a certain bundle $P$ over $M$,  the structure functions of this frame give in essence the full system of invariants of this geometric structure and $\dim P$ gives an upper bound for the group for the absolute parallelism and the latter can be done easily \cite{stern}.
%Once  the canonical frame for a class of geometric objects is constructed
%The aim of the present paper is to demonstrate this approach in two rank 2 distribution with non-zero Wilczynski invariants and affine control systems with one input.
%
%Once  the canonical frame for some class of geometric objects is constructed,

%was independent of the Tanaka symbol
%It led not only to a new geometric-control interpretation of the classical Cartan invariant of rank 2 distributions on five dimensional manifolds, but %also to a generalization of this invariant, to the construction of canonical frames for rank 2 distributions on manifolds of arbitrary dimension, and, in %combination with algebraic prolongation techniques in spirit of N. Tanaka, to the construction of canonical frames for distributions of arbitrary rank.

%The most vi

\section {Abnormal extremals of rank 2 distributions}
\label{abnsec}
%and their Jacobi curves}
%The class of rank 2 distribution}
\setcounter{equation}{0}

%\subsection{Abnormal extremals}
Let $D$ be a rank 2 distribution on a manifold $M$. A smooth section of a vector bundle $D$ is called a \emph{horizontal vector field of $D$}. Taking iterative brackets of horizontal vector fields of $D$, we obtain the natural filtration $\{\dim D^j(q)\}_{j\in \mathbb N}$  on each tangent space $T_q M$.  Here $D^j$ is the $j$-th power
of the distribution $D$, i.e., $D^j=D^{j-1}+[D,D^{j-1}]$, $D^1=D$, or , equivalently, $D^j(q)$ is a linear span of all Lie brackets  of the length not greater than $j$ of horizontal vector fields of $D$ evaluated at $q$.

Assume that $\dim D^2(q)=3$ and $\dim D^3(q)>3$ for any
$q\in M$. Denote by $(D^j)^{\perp}\subset T^*M$ the
annihilator of the $j$th power $D^j$, namely
%\begin{equation}
%\label{annihil}
$$(D^j)^{\perp}= \{(p,q)\in T^*M:\,\, p\cdot v=0\,\,\forall
v\in D^j(q)\}.$$
%\end{equation}

Recall that abnormal extremals of $D$ are by definition the Pontryagin extremals with the vanishing Lagrange multiplier near the
functional for any extremal problem with constrains, given by the distribution $D$. They depend only on  the distribution $D$ and
not on a functional.

It is easy to show (see, for example, \cite{zelrigid,doubzel2}) that for rank 2 distributions all abnormal extremals lie in $(D^2)^\perp$ and that through any point of the codimension $3$ submanifold
$(D^2)^\perp\backslash(D^3)^\perp$ of $T^*M$ passes exactly one abnormal extremal or, in other words, $(D^2)^\perp\backslash(D^3)^\perp$ is foliated by
the characteristic $1$-foliation of abnormal extremals. To describe this foliation let
$\pi:T^*M\mapsto M$ be the canonical projection. For any
$\lambda\in T^*M$, $\lambda=(p,q)$, $q\in M$, $p\in
T_q^*M$, let $\mathfrak{s}(\lambda)(\cdot)=p(\pi_*\cdot)$
be the canonical Liouville form and $\sigma=d\mathfrak {s}$
be the standard symplectic structure on $T^*M$. Since the
submanifold $(D^2)^\perp$ has odd codimension in $T^*M$,
%Therefore
the kernels of the restriction $\sigma|_{(D^2)^\perp}$ of $\sigma$
on $(D^2)^\perp$ are not trivial. At the
%the generic points of
%$(D^2)^\perp$, namely,
the points of $(D^2)^\perp\backslash (D^3)^\perp$ these
kernels are one-dimensional.
%These kernels
They form the \emph{characteristic line distribution} in $(D^2)^\perp\backslash(D^3)^\perp$, which will be denoted by
${\mathcal C}$. The line distribution ${\mathcal C}$ defines the desired  \emph{characteristic 1-foliation} on
$(D^2)^\perp\backslash(D^3)^\perp$ and the leaf of this foliation through a point  is exactly the abnormal extremal passing through this point.
From now on we shall work with abnormal extremals which are integral curves of the characteristic distribution $\mathcal C$.
%The leaves of this
%foliation are called  the {\it characteristic curves} of
%$(D^2)^\perp\backslash(D^3)^\perp$.
 %In Control Theory
%Actually the leaves of this
%foliation
%%these characteristic
%%curves
%are
%%also
%so-called \emph{regular abnormal extremals of the distribution $D$}.
%
% Regularity means that they do not pass through $(D^3)^\perp$, which is equivalent to the fact
%that they satisfy so-called strong generalized Legendre--Glebsch condition (\cite{agrsach}, \cite{zel}). In the sequel
%for shortness we will omit the word regular and the leaves of the characteristic foliation will be called just \emph
%{abnormal extremals of $D$}.

The characteristic line distribution
${\mathcal C}$ can be easily described in terms of a local basis  of
the distribution $D$, i.e. $2$ horizontal vector fields $X_1$ and $X_2$ such that
%\begin{equation}
%\label{X12}
$D( q)={\rm span}\{X_1(q), X_2(q)\}$ for all $q$ from some open set of $M$.
%\end{equation}
%Since our study is local, we can always suppose that such
%basis exists, restricting ourselves, if necessary, on some
%coordinate neighborhood instead of whole $M$.
%Given the
%basis $X_1$, $X_2$ one can construct a special vector field
%tangent to the characteristic $1$-foliation $Ab_D$. For
%this suppose that
%\begin{eqnarray}
Denote by
\begin{equation}
\label{x345}
%&~&
X_3=[X_1,X_2],\,\,
%\quad {\rm mod}\, D,\,\,\,
X_4 =\bigl[X_1,[X_1,X_2]\bigr],\,\,
%=[X_1,X_3]\quad {\rm mod}\, D^2,
%\nonumber
%\\ &~&~
%\label{x345} \\
%&~&
X_5 =\bigl[X_2,[X_1,X_2]\bigr].
%=[X_2,X_3]\quad{\rm mod}\, D^2.
%\nonumber
%\end{eqnarray}
\end{equation}
Let us introduce the ``quasi-impulses''
$u_i:T^*M\mapsto \mathbb R$, $1\leq i\leq 5$,
\begin{equation}
\label{quasi25} u_i(\lambda)=p\cdot X_i(q),\,\,\lambda=(p,q),\,\,
q\in M,\,\, p\in T_q^* M.
\end{equation}
Then by the definition
\begin{equation}
\label{d2u}
(D^2)^\perp=\{\lambda\in T^*M:
u_1(\lambda)=u_2(\lambda)=u_3(\lambda)=0\}.
\end{equation}
 As usual, for
a given function $h:T^*M\mapsto \mathbb R$ denote by $\overrightarrow h$
the corresponding Hamiltonian vector field defined by the
relation $i_{\overrightarrow h}\sigma
%(\overrightarrow G,\cdot)
=-d\,G
%(\cdot)
$.
%\begin{lem}
%\label{foli25lem}
Then by the direct computations (see, for example, \cite{doubzel2})
%Then it is
%easy to show (see, for example, \cite{zel}) that
%\begin{equation}
%\label{ker25} \ker\sigma\Bigr|_{D^\perp}\Bigl.(\lambda)=
%{\rm span}(\overrightarrow u_1(\lambda),\overrightarrow u_2(\lambda)),\quad
%\forall \lambda\in D^\perp,
%\end{equation}
the characteristic line distribution $\mathcal C$ satisfies
\begin{equation}
\label{foli25}
%\ker\sigma\Bigr|_{(D^2)^\perp}\Bigl.(\lambda)
{\mathcal C}= \rm{span} \{u_4\overrightarrow{u}_2-u_5\overrightarrow{u}_1 \}.
%{\mathcal C}= \bigl\{\mathbb{R} \bigl((u_4
%\overrightarrow{u}_2-u_5\overrightarrow{u}_1)\bigr)\bigr\}.
%\quad \forall
%\lambda\in (D^2)^\perp\backslash (D^3)^\perp
\end{equation}
%\end{lem}

%\begin{proof}
%Take a vector field $H$ on $(D^2)^\perp\backslash(D^3)^\perp$ such
%that locally ${\mathcal C}(\lambda)= \bigl\{{R}
%H(\lambda)\}$. Then by definition of $\mathcal C$ we have
%$i_H\sigma|_{(D^2)^\perp}=0$. From this and \eqref{d2u} it follows
%that $i_H\sigma \in
%%{\rm span} \{d\,u_i\}_{i=1}^3$
%\langle du_1, du_2, du_3 \rangle,$ which implies that
%\begin{equation}
%\label{Hsp} H\in \langle \overrightarrow u_1, \overrightarrow u_2, \overrightarrow u_3 \rangle.
%%H\in {\rm span} \{\overrightarrow u_i\}_{i=1}^3.
%\end{equation}
%On the other hand, $H$ is tangent to $(D^2)^\perp$, i.e $du_j(H)=0$ for $1\leq j\leq 3$. This and  (\ref{Hsp}) easily
%implies (\ref{foli25}).
%\end{proof}

%\section{Jacobi curves of abnormal extremals}
%\setcounter{equation}{0}
\section{Jacobi curves of abnormal extremals}
\label{jacsec}
\setcounter{equation}{0}

Now we are ready to define the Jacobi curve of an abnormal extremal of $D$.
For this first lift the distribution $D$ to $(D^2)^\perp$, namely considered the distribution $\mJ$ on $(D^2)^\perp$ such that
%the pull-back of the distribution $D$ on
%$(D^2)^\perp\backslash(D^3)^\perp$ by the canonical projection
%$\pi$:
\begin{equation}
\label{prejac}
{\mathcal J}(\lambda)=
%\bigl(T_\lambda
%(T^*_{\pi(\lambda)}M)+
%\ker\sigma|_{D^\perp}(\lambda)\bigr)\cap T_\lambda
%(D^2)^\perp=
%\begin{equation}
%\label{jacproj}
%\hat L_\T=
\{v\in T_{\lambda}(D^2)^\perp:\,d\pi (v)\in D(\pi\bigl(\lambda)\bigr)\}.
\end{equation}
%\end{equation}
%(\begin{scriptsize}here $T_\lambda (T^*_{\pi(\lambda)}M)$ is the tangent space to the
%fiber $T^*_{\pi(\lambda)}M$ at the point $\lambda$).\begin{footnotesize}\begin{small}\begin{normalsize}\begin{large}\begin{Large}\begin{LARGE}\begin{huge}\end{huge}\end{LARGE}\end{Large}\end{large}\end{normalsize}\end{small}\end{footnotesize}\end{scriptsize}
%Note that $\dim {\mathcal J}(\lambda)=n-1$.
%Actually, ${\mathcal J}$ is
 Note that $\dim \mJ = n-1$ and $\mathcal C\subset \mJ$ by (\ref{foli25}) .
%The rank $n-1$
%distribution ${\mathcal J}$ is called the \emph {lift of
%distribution $D$ to $(D^2)^\perp\backslash(D^3)^\perp$}. We will
%work with the lift $\mJ$ instead of the original distribution $D$.
The distribution ${\mathcal J}$ is called the \emph {lift of the
distribution $D$ to $(D^2)^\perp\backslash(D^3)^\perp$}.

Given a segment  $\gamma$ of an abnormal extremal (i.e. of a leaf of the $1$-characteristic foliation) of
$D$, take  a sufficiently small neighborhood $O_\gamma$  of $\gamma$ in $(D^2)^\perp$
such that the quotient
%\begin{equation}
%\label{Ndef}
$N=O_\gamma /(\text {\emph{the characteristic one-foliation}})$
%\end{equation}
 is a
well defined smooth manifold. The quotient manifold $N$ is a
symplectic manifold endowed with the symplectic structure
$\bar\sigma$ induced by $\sigma |_{(D^2)^\perp}$. Let

\begin{equation}
\label{phi}
\phi
:O_\gamma\to N
\end{equation}
be the canonical projection on the factor.
%Its dimension is equal to $2(n-2)$.
%The quotient manifold $N$ is
%a symplectic manifold endowed with a symplectic structure
%$\bar\sigma$ induced by $\sigma |_{(D^2)^\perp}$.
%Let $\phi \colon O_\gamma\to N$ be the canonical projection on the
%factor.
%It is easy to check that $\phi_*\bigl({\mathcal
%J}(\lambda)\bigr)$ is a Lagrangian subspace of the
%symplectic space $T_{\gamma}N$ for all $\lambda\in\gamma$.
 %Let $L(T_\gamma
%N)$ be the Lagrangian Grassmannian of the symplectic space
%$T_\gamma N$, i.e., $L(T_\gamma N)=\{\Lambda\subset
%T_\gamma N:\Lambda^\angle=\Lambda\}$, where
%$\Lambda^\angle$ is the skew-symmetric complement of the
%subspace $\Lambda$, $\Lambda^\angle=\{v\in T_\gamma
%N:\bar\sigma(v,\Lambda)=0\}$.
%\begin{defin}
%\label{jacdef}
Define the following curves of subspaces in
$T_\gamma N$:
%the mapping
%% \emph{ Jacobi curve of $\gamma$} is the mapping
%$J_\gamma\colon\gamma\mapsto G_{n-2}(T_\gamma N)$ by
%%be the mapping such that
\begin{equation}
\label{jacurve}
\lambda\mapsto
%\widetilde J
%%_\gamma
%%^{(i)}
%(\lambda)
%%\stackrel{def}
%{=} \phi_*\bigl({\mathcal J}
%%^{(i)}
%(\lambda) \bigr), \quad
%\widetilde J
%_\gamma
%^{(i)}(\lambda)
%\stackrel{def}
%{=}
\phi_*\bigl({\mathcal J}(\lambda) \bigr), \quad
\forall
 %i\in \mathbb N,
\lambda\in\gamma.
\end{equation}
%For shortness we also set $\widetilde J(\lambda)=\widetilde
%J^0(\lambda)$ ($=\widetilde J_0(\lambda)$). First note that
Informally speaking, these curves describe the dynamics of the distribution $\mathcal J$
%corresponding subspaces of the flag
%\eqref{flag}
w.r.t. the characteristic $1$-foliation along the abnormal extremal
$\gamma$.

Note that there exists a straight line, which is common to all subspaces
appearing in (\ref{jacurve}) for any $\lambda\in\gamma$. So, it is more
convenient to get rid of it by a factorization. Indeed, let $e$ be the Euler
field on $T^*M$, i.e., the infinitesimal generator of homotheties on the fibers
of $T^*M$. Since a transformation of $T^*M$, which is a homothety on each fiber
with the same homothety coefficient, sends abnormal extremals to abnormal
extremals, we see that the vector $\bar e= \phi_*e(\lambda)$ is the same for any
$\lambda\in \gamma$ and lies in any subspace appearing in (\ref{jacurve}). Let
\begin{equation}
\label{jacurve1} J_\gamma(\lambda)=\phi_*\bigl({\mathcal
J}(\lambda)\bigr)/\{\mathbb R \bar e\}, \quad \forall \lambda\in\gamma
%,\quad
%J_{(i)}(\lambda)=\phi_*\bigl({\mathcal
%J}_{(i)}(\lambda)\bigr)/\{\mathbb R \bar e\}.
\end{equation}

The (unparametrized) curve $\lambda\mapsto J_\gamma(\lambda),\,\lambda\in\gamma$ is called the \emph{Jacobi curve of the abnormal extremal $\gamma$}.
%For simplicity we also set $J(\lambda)=J^{(0)}(\lambda)$ ($=
%J_{(0)}(\lambda)$).
It is clear that all subspaces appearing in
(\ref{jacurve1}) belong to the space
\begin{equation}
\label{spaceW}
 W_\gamma=\{v\in T_\gamma N: \bar\sigma(v,\bar
e)=0\}/\{\mathbb R \bar e\}.
\end{equation}
 and that
\begin{equation}
\label{dimJ}
\dim J_\gamma(\lambda)=n-3.
\end{equation}
The space $W_\gamma$ is endowed with the natural symplectic structure $\tilde\sigma_\gamma$
induced by $\bar\sigma$.
% which for simplicity will be denoted also by $\bar\sigma$.
Also $\dim W_\gamma=2(n-3)$.

Given a subspace $L$ of $W_\gamma$ denote by $L^\angle$ the skew-orthogonal complement of $L$ with respect to the symplectic form $\tilde\sigma_\gamma$,
$L^\angle=\{v\in W_\gamma, \sigma_\gamma(v,\ell)=0\quad \forall \ell\in L\}$. Recall that the subspace $L$ is called \emph{isotropic} if $L\subseteq L^\angle$,
\emph{coisotropic} if $L^\angle\subseteq L$, and \emph{Lagrangian}, if $L=L^\angle$. Directly from the definition, the dimension of an isotropic subspace does not exceed $\frac{1}{2}\dim W_\gamma$, and a Lagrangian subspace is an isotropic subspace of the maximal possible dimension $\frac{1}{2}\dim W_\gamma$.
The set of all Lagrangian subspaces of $W_\gamma$ is called the \emph{Lagrangian Grassmannian of $W_\gamma$}.

It is easy to see (\cite{doubzel2, zelvar}) that the Jacobi curve of an abnormal extremal consists of Lagrangian subspaces, i.e. it is a curve in the Lagrangian Grassmannian of $W_\gamma$. In the case $n\geq 5$ (equivalently, $\dim \,W_\gamma\geq 4$) curves in the Lagrangian Grassmannian of $W_\gamma$ have a nontrivial geometry with respect to the action of the linear symplectic group and any symplectic invariant of Jacobi curves of abnormal extremals produces an invariant of the original distribution $D$.

\section{Reduction to geometry of curves in projective spaces}
\label{projsec}
\setcounter{equation}{0}

In the earlier works \cite{jac1, zelvar} invariants of Jacobi curves were constructed using the notion of the cross-ratio of four points in Lagrangian Grassmannians analogous to the classical cross-ratio of four point in a projective line. Later, we developed a different method, leading to the construction of canonical bundles of moving frames and invariants for quite general curves in Grassmannians and flag varieties \cite{flag1,flag2}. The geometry of Jacobi curves $J_\gamma$ in the case of rank 2 distributions can be reduced to the geometry of the so-called self-dual curves in the projective space $\mathbb P W_\gamma$.

%In order to study the differential geometry of a Jacobi curve $J_\gamma$
For this first one can produce a curve of flags of isotropic/coisotropic subspaces of $W_\gamma$ by a series of osculations together with the operation of taking skew symmetric complements.
 For this, denote by $C(J_\gamma)$ the \emph{tautological bundle} over $J_\gamma$: the fiber of $C(J_\gamma)$ over the point $J_\gamma(\lambda)$ is the linear space $J_\gamma(\lambda)$. Let $\Gamma(J_\gamma)$ be the space of all smooth sections of $C(J_\gamma)$. If $\psi:(-\varepsilon,\varepsilon)\mapsto \gamma$ is a parametrization of
$\gamma$ such that $\psi(0)=\lambda$,
%and $\psi=\varphi^{-1}$
then for any $i\geq 0$ define
\begin{eqnarray}
%\begin{split}
~&
J_\gamma^{(i)}(\lambda):={\rm span}\{\frac{d^j}{d\tau^j}\ell\bigl(\psi(t))\bigr|_{t=0}: \ell\in\Gamma(J_\gamma), 0\leq j\leq i\} \label{exti}\\
~&J_\gamma^{(-i)}(\lambda)= \bigl(J^{(i)}_\gamma(\lambda)\bigr)^\angle \label{contri}
%\end{split}
\end{eqnarray}
For $\,i>0$ we say that the space $J^{(i)}_\gamma(\lambda)$ is the \emph{$i$-th osculating space of the curve $J_\gamma$ at $\lambda$}.

Note that $J_\gamma =J_\gamma^{(0)}$.
Directly from the definitions the subspaces  $J^{(i)}_\gamma(\lambda)$ are coisotropic for $i>0$ and isotropic for $i<0$ and the tuple $\{J^{(i)}_\gamma(\lambda)\}_{i\in\mathbb Z}$ defines a filtration of $W_\gamma$. In other words, the curve $\lambda\mapsto \{J^{(i)}_\gamma(\lambda)\}_{i\in\mathbb Z}$ is a curve of flags of $W_\gamma$. Besides, it can be shown \cite{zelvar}
that
$$\dim \,J^{(1)}(\lambda)-\dim\, J^{(0)}(\lambda) =\dim \,J^{(0)}(\lambda)-\dim\, J^{(-1)}(\lambda)=1,$$ which in turn implies that
$\dim \,J^{(i)}(\lambda)-\dim\, J^{(i-1)}(\lambda) \leq 1$, i.e. the jump of dimensions between the consecutive subspaces of the filtration
$\{J^{(i)}_\gamma(\lambda)\}_{i\in\mathbb Z}$ is at most $1$. This together with \eqref{dimJ} implies that $\dim \,J_\gamma^{(i)}(\lambda)\leq n-3+i$ for $i>0$.

We say that $\lambda$ is a \emph{regular point of $(D^2)^\perp\backslash (D^3)^\perp$} if  $\dim \,J_\gamma^{(i)}(\lambda)= n-3+i$ for $0<i\leq n-3$ or,  equivalently, if $ J_\gamma^{(n-3)}(\lambda)=W_\gamma$. A rank $2$ distribution $D$ is called of \emph{maximal class at a point $q\in M$} if at least one point in $\pi^{-1}(q)\cap (D^2)^\perp$ is regular. Since by \eqref{foli25} the characteristic distribution $\mathcal C$ generated by a vector field depending algebraically on the fibers $(D^2)^\perp$, if $D$ is of maximal class at a point $q\in M$, then the set of all regular points of $\pi^{-1}(q)\cap (D^2)^\perp$ is non-empty open set in Zariski topology. The same argument is used to show that the set of germs of rank 2 distributions of maximal class is generic.

If  $D$ is of maximal class at $q$ and $n\geq 5$, then by necessity $\dim D^3(q)=5$. The following question is still open: Does there  exist a rank $2$ distribution with
$\dim D^3=5$ such that it is not of maximal class on some open set of $M$? We proved that the answer is negative for $n\leq 8$ and we have strong evidences that the answer is negative in general.

\begin{remark}
\label{openrem}
Note that from \eqref{foli25} it follow that if a rank 2 distribution $D$ is of maximal class at a point $q\in M$ then the set of all lines
$\{d\pi \bigl(\mathcal C(\lambda)\bigr): \lambda\in \mathcal R_D\cap \pi^{-1}(q)\}$ is an open and dense subset of the projectivization $\mathbb P D(q)$
of the plane $D(q)$, where, as before, $\pi:T^*M\rightarrow M$ is the canonical projection. $\Box$
\end{remark}

From now on we will work with rank $2$ distributions of maximal class. In this case $\dim J_\gamma ^{(4-n)}(\lambda)=1$, i.e. the curve $J_\gamma ^{(4-n)}$ is a curve in the projective space $\mathbb PW_\gamma$. Moreover, the curve of flags $\lambda\mapsto \{J^{(i)}_\gamma(\lambda)\}_{i=3-n}^{n-3},\,\lambda\in\gamma$ is the curve of complete flags and the space $J^{(i)}_\gamma(\lambda)$ is the $(i+n-4)$th-osculating space of the curve $J_\gamma ^{(4-n)}$. In other words, the whole curve of complete flags $\lambda\mapsto \{J^{(i)}_\gamma(\lambda)\}_{i=3-n}^{n-3},\,\lambda\in\gamma$ can be recovered from the curve $J_\gamma ^{(4-n)}$ and the differential geometry of Jacobi curves of abnormal extremals of rank $2$ distributions is reduced to the differential geometry of curves in projective spaces.

\section{Canonical projective structure and Wilczynski invariants}
\label{unparsec}
\setcounter{equation}{0}

The differential geometry of curves in projective spaces is the
classical subject, essentially completed already in 1905 by
E.J.~Wilczynski (\cite{wilch}). In particular, it is well known that
these curves  are endowed with the canonical projective structure,
i.e., there is a distinguished set of parameterizations (called
projective) such that the transition function from one such
parametrization to another is a M\"{o}bius transformation. Let us
demonstrate how to construct it for the curve $\lambda\mapsto
J_\gamma^{(4-n)}(\lambda)$, $\lambda\in\gamma$.

As before, let $C(J_\gamma^{(4-n)})$
%$\Gamma(J_\gamma^{(4-n)})$ be the space of all
%smooth section of
be the tautological bundle  $C(J_\gamma^{(4-n)})$ over $J_\gamma^{(4-n)})$.
Set $m=n-3$. Here we use a ``naive approach'', based on reparametrization rules
for certain coefficient in the expansion of the derivative of order $2m$ of certain sections
of  $C(J_\gamma^{(4-n)})$ w.r.t. to the lower order derivatives of this sections.
For the more algebraic point of view, based on Tanaka-like theory of curves of flags and $\mathfrak{sl}_2$-representations see
\cite{doub3, flag1}.

%mappings $\ell$ from $\gamma$ to the ambient symplectic space
%$W$ such that $\ell(\lambda)\in J_{(n-4)}(\lambda)$ for all
%$\lambda\in\gamma$. The elements of $\Gamma(J_{(n-4)}(\lambda))$
%will be called \emph{sections of the curve $\lambda\mapsto
%J_{(n-4)}(\lambda)$}.
Take some parametrization
$\psi\colon I\mapsto \gamma$ of $\gamma$, where $I$ is an interval in $\mathbb R$
% and let
%$\psi=\varphi^{-1}$.
By above, for any section
$\ell$ of $C(J_\gamma^{(4-n)})$ one has that
\begin{equation}
\label{spanall} {\rm span}\bigl\{\frac{d^j}{dt^j}\ell\bigl(\psi(t)\bigr)\mid
0\leq j\leq 2m-1\bigr\}=W_\gamma.
\end{equation}
A curves in the projective space $\mathbb P W_\gamma$ satisfying the last property is called \emph{regular} (or \emph{convex}).
It is well known that there exists the unique, up to the
multiplication by a nonzero constant, section
%$t\mapsto
$E$ of $C(J_\gamma^{(4-n)})$, called a \emph{canonical section of $C(J_\gamma^{(4-n)})$ with respect to the parametrization $\psi$}, such that
%$(t)$,
%corresponding to the parameter $t$
%rization
%$E$if $\tau$ is another parameter,
%$t\mapsto E(t)$
\begin{equation}
\label{last0}
\frac{d^{2m}}{dt^{2m}}E\bigl(\psi(t)\bigr)=\sum_{i=0}^{2m-2}B_i(t)\frac{d^{i}}{dt^{i}}
E\bigl(\psi(t)\bigr),
\end{equation}
i.e. the coefficient of the term
$\frac{d^{2m-1}}{dt^{2m-1}}E\bigl(\psi(t)\bigr)$ in the linear
decomposition of $\frac{d^{2m}}{dt^{2m}}E\bigl(\psi(t)\bigr)$ w.r.t. the
basis $\bigl\{\frac{d^i}{dt^i}E\bigl(\psi(t)\bigr):0\leq i\leq 2m-1\bigr\} $
vanishes.
% We will say that in this case the section $E$ is \emph{normal} with respect to the parametrization $\psi$.
% such section of $C(J_\gamma^{(4-n)})$ \emp

Further, let $\psi_1$ be another parameter, $\widetilde E$ be a canonical section of $C(J_\gamma^{(4-n)})$ with respect to the parametrization $\psi_1$, and $\upsilon=\psi^{-1}\circ\psi_1$. Then directly from the definition it easy to see that

\begin{equation}
\label{EE}
\widetilde E\bigl(\psi_1(\tau)\bigr)=c(\upsilon'(\tau))^{\frac{1}{2}-m} E(\psi(t))
\end{equation}
for some non-zero constant $c$.

Now let $\widetilde B_{i}(\tau)$ be the coefficient in the linear decomposition of $\frac{d^{2m}}{d\tau^{2m}}\widetilde E\bigl(\psi_1(\tau)\bigr)$ w.r.t. the
basis $\bigl\{\frac{d^i}{d\tau^i}\widetilde E\bigl(\psi_1(t)\bigr):0\leq i\leq 2m-1\bigr\}$ as in \eqref{last0}. Then, using the relation \ref{EE}
it is not hard to show
that the coefficients $B_{2m-2}$ and
$\widetilde B_{2m-2}$ in the decomposition (\ref{last0}),
corresponding to parameterizations $\psi$ and $\psi_1$, are
related as follows:
%the canonical representative of
%$\zeta$ w.r.t. the parameter $\tau$, then
\begin{equation}
 \label{rhorep}
 \widetilde
 B_{2m-2}(\tau)=\upsilon'(\tau)^2B_{2m-2}(\upsilon(\tau))-\frac{m(4m^2-1)}{3}
\mathbb{S}(\upsilon)(\tau),
\end{equation}
 where $\mathbb{S}(\upsilon)$ is the
Schwarzian derivative of $\upsilon$,
%\begin{equation}
%\label{sch}
$\mathbb S(\upsilon)=
%\frac{1}{2}\frac{\varphi^{(3)}}{\varphi'}-
%\frac{3}{4}\Bigl (\frac{\varphi''}{\varphi'}\Bigr)^2
\frac {d}{dt}\Bigl(\frac {\upsilon''}{2\,\upsilon'}\Bigr)
-\Bigl(\frac{\upsilon''}{2\,\upsilon'}\Bigr)^2$.

From the last formula and the fact that ${\mathbb S}\upsilon\equiv
0$ if and only if the function $\upsilon$ is M\"{o}bius it follows
that \emph{the set of all parameterizations $\varphi$ of $\gamma$
such that
\begin{equation}
\label{A2m-2} B_{2m-2}\equiv 0
\end{equation}
defines the canonical projective structure on $\gamma$}. Such
parameterizations are called the \emph {projective parameterizations
of the abnormal extremal $\gamma$}. If $\psi$ and $\psi_1$ are two projectivization, then there exists a  M\"{o}bius transformation $\upsilon$ such that $\psi_1=\psi\circ \upsilon$.

Now let $t$ be a projective parameter on $J_\gamma^{(4-n)}$. E. Wilczynski
showed that for any $i$, $1\leq i\leq 2m-2$, the following degree
$i+2$ differentials
\begin{equation}
\label{willy} {\mathcal
W}_i(t)\stackrel{def}{=}\frac{(i+1)!}{(2i+2)!}\left(\sum_{j=1}^{i}(-1)^{j-1}
\frac{
(2i-j+3)!(2m-i+j-3)!}{(i+2-j)!j!}B_{2m-3-i+j}^{(j-1)}(t)\right)(dt)^{i+2}
\end{equation}
 on
$J_\gamma^{(4-n)}$ does not depend on the choice of the projective parameter. In other words, for any $\lambda\in \gamma$, $\mathcal W_i$ is the well defined homogeneous polynomial of degree $i+2$ on the tangent line to $J_\gamma(\lambda)$ or, equivalently, on the tangent line to the abnormal extremal $\gamma$ at $\lambda$ %(the normalization constant before the sum is chosen just for
%convenience in the sequel).
The form ${\mathcal W}_i$ is called the \emph {$(i+2)$-th order Wilczynski
invariant of the curve $J_\gamma^{(4-n)}$}.

\begin{remark}
\label{ratnormrem}
Among all regular curves in the projective space $\mathbb P^k$ of dimension $k$,
%with the osculating flags at all points being complete flags,
all curves
%,  up to the action of the general linear group,
with all  Wilczynski invariants equal to zero, belong to the \emph{rational normal curve}, i.e. to the curve
%which is the image of the map from $\mathbb P^1$ to $\mathbb P^k$
consisting  of the points of the form  $[t^k:t^{k-1}s\ldots:ts^{k-1}:s^k]$ in some homogeneous coordinates. $\Box$
\end{remark}

Note that the curve $J_\gamma^{(4-n)}$ is not an arbitrary regular curve in the projective space $\mathbb P W$. It satisfies the following additional property:
\medskip

{\bf (S1)} \emph{ The $(n-4)$th-osculating space of $J_\gamma^{(4-n)}$  at any point $\lambda$ 
%(equal to the Jacobi curve) 
is Lagrangian. }
\medskip

%Actually this osculating space is equal to the Jacobi curve at $\lambda$.
As shown already by Wilczynski \cite{wilch} such curves are \emph{self-dual} in the following sense:

{\bf (S2})
\emph{The curve $(J^{(n-4)}_\gamma)^*$ in the projectivization $\mathbb P W_\gamma^*$ of the dual space $W_\gamma^*$, which is dual to the curve of hyperplanes  $J^{(n-4)}_\gamma$ obtained from the original curve  $J_\gamma^{(4-n)}$ by the osculation of order $2(n-4)$,  is  equivalent to the original curve $J_\gamma^{(4-n)}$, i.e. there is a linear transformation $A:W\mapsto W^*$ sending $J^{(n-4)}_\gamma$ onto $(J^{(n-4)}_\gamma)^*$.}

Note that in contrast to property (S1) the formulation of property (S2) does not involve a symplectic structure on $W_\gamma$. However,  it can be shown \cite{wilch, kwessi} that if the property (S2) holds then there exists a unique, up to a multiplication by a nonzero constant, symplectic structure on $W_\gamma$ such that the property (S1) holds (here it is important that $\dim\, W_\gamma$ is even; similar statement for the case of odd dimensional linear space involves nondegenerate symmetric forms instead of skew-symmetric ones). Since in our case the symplectic structure on $W_\gamma$ is a priori given, in the sequel we will consider projective spaces of linear symplectic spaces only and by self-dual curves we will mean curves satisfying property (S1).

It was shown by Wilczynski that a curve in a projective space is self-dual if an only if all Wilczynski invariant of odd order vanish (for the modern Lie-algebraic interpretation of this fact see \cite{flag1}).
The remaining $n-4$ Wilczynski invariants of even order, $\mathcal W_{2i}$, $1\leq i\leq n-4$, constitute the fundamental set of symplectic invariants of the unparametrized curve $J_\gamma^{(4-n)}$. Note that the nonzero Wilczynski invariants have order $\geq 4$ in this case and $\mathcal W_2=B_{2m-4}(t)(dt)^4$ in any projective parameter $t$.

Taking the Wilczynski invariants $\mathcal W_{2i}$ for the Jacobi curves of all abnormal extremals living in the set $\mathcal R_D$ we obtain the invariants of the distribution $D$, called the \emph {generalized Wilczynski invariants of $D$ of order $2(i+1)$} and denoted also by $\mathcal W_{2i}$.
By the constructions, the generalized Wilczynski invariant  $\mathcal W_{2i}$ at a regular point $\lambda$ of $(D^2)^\perp\backslash (D^3)^\perp$,
is a special homogeneous polynomial of degree $2(i+1)$ on the tangent line  at $\lambda$ to the abnormal extremal passing through $\lambda$. Another interpretation of these invariants, as certain functions on fibers of $(D^2)^\perp$, defined by a multiplication on a constant on a fiber,  is given in \cite{zelvar}.

Note that in the case $n=5$ the only possibly nonzero generalized Wilczynski invariant is $\mathcal W_2$ and it has degree $4$. As shown in \cite{zelcart}, under an appropriate identification, this invariant coincides with the classical Cartan invariant obtained in \cite{cart10}.

%\begin{remark}
%
%\end{remark}
%let $\gamma$ be the abnormal extremal, passing through
%second Wilczynski invariants, which will be used in the sequel, have
%the form
%
%\begin{eqnarray}
%&~&\label{willy1} \mathcal W_1=B_{2m-3}(t)(dt)^3,\quad 2m\geq 3; \\
%&~&\label{willy2} \mathcal
%W_2=\left(B_{2m-4}(t)-\frac{2m-3}{4}B_{2m-3}'(t)\right)(dt)^4,\quad 2m\geq 4
%\end{eqnarray}

\section{Canonical frames for rank 2 distributions of maximal class}
\label{londonsec}
\label{canfrsec}
\setcounter{equation}{0}

Now let $\mathcal R_D$ be the set if all regular points of $(D^2)^\perp\backslash(D^3)^\perp$.
Denote by ${\mathfrak P}_\lambda$ the set of all
projective parameterizations $\psi$ on the characteristic curve $\gamma$ , passing through
$\lambda$, such that $\psi(0)=\lambda$. Let
%\begin{equation*}
$$\Sigma_D=\{(\lambda, \psi):\lambda\in {\mathcal R}_D,
\psi\in {\mathfrak P}_\lambda\}.$$ Actually, $\Sigma_D$
is a principal bundle over ${\mathcal R}_D$ with the
structural group of all M\"{o}bius transformations,
preserving $0$ and $\dim \, \Sigma_D=2n-1$. The main results of \cite{doubzel1,doubzel2} can be summarized in the following:

%Actually,
%${\mathcal H}$ is isomorphic to the subgroup of lower
%triangular matrices of $SL(2,\mathbb R)$. Our main result
%is the following

\begin{theorem}
\label{frtheor}
%\label{main}
For any rank $2$ distribution in $\mathbb R^n$ with  $n>5$ of maximal class
there exists the canonical, up to the action of $\mathbb Z_2$, frame on the corresponding
$(2n-1)$-dimensional manifold $\Sigma_D$ so that two distributions from the considered class are equivalent if and only if their canonical frames are equivalent. The group of symmetries of such
distributions is at most $(2n-1)$-dimensional and this upper bound is sharp.
All distributions from the considered class with
$(2n-1)$-dimensional Lie algebra of infinitesimal
symmetries is locally equivalent to the distribution $D_{((0,\ldots 0)}$ generated by the vector fields $X_1$ and $X_2$ from \eqref{X1}-\eqref{X2} with all $r_i$ equal to $0$
or, equivalently, associated
with the underdetermined ODE $z'(x)=\bigl(y^{(n-3)}(x)\bigr)^2$.
The
symmetry algebra of this distribution is isomorphic to a semidirect
sum of $\mathfrak{gl}(2,\mathbb R)$ and $(2n-5)$-dimensional
Heisenberg algebra ${\mathfrak n}_{2n-5}$ such that $\mathfrak{gl}(2,\mathbb R)$  acts irreducibly on a complement of the center of  ${\mathfrak n}_{2n-5}$ to ${\mathfrak n}_{2n-5}$ itself .
\end{theorem}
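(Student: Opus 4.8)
The plan is to follow the strategy already sketched in the earlier sections: reduce the equivalence problem for the distribution $D$ to a symplectic equivalence problem for its family of Jacobi curves, which by the discussion of Section \ref{projsec} is in turn the equivalence problem for a family of self-dual regular curves in the projective spaces $\mathbb P W_\gamma$ of dimension $2(n-3)$, each carrying its canonical projective structure (Section \ref{unparsec}). First I would set up the bundle $\Sigma_D$ as described: over a regular point $\lambda\in\mathcal R_D$ the fiber is the set of projective parametrizations $\psi$ of the characteristic curve through $\lambda$ normalized by $\psi(0)=\lambda$, so that $\Sigma_D$ is a principal bundle with structure group the $2$-dimensional group of M\"obius transformations fixing $0$, whence $\dim\Sigma_D=(2n-3)+2=2n-1$. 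On $\Sigma_D$ one has a tautological way to pick, for each point, the canonical section $E$ of $C(J_\gamma^{(4-n)})$ with respect to $\psi$ (defined up to a constant by \eqref{last0}), together with its derivatives $\frac{d^i}{dt^i}E$, $0\le i\le 2m-1$; these span $W_\gamma$ by \eqref{spanall}. Differentiating this moving frame along the characteristic direction and along the fiber directions of $\Sigma_D$ produces structure functions; the coefficients $B_i$ from \eqref{last0}, and hence the Wilczynski invariants, appear among them. The point is that, after the normalization to projective parametrizations, the residual freedom (the $\mathbb Z_2$ coming from $E\mapsto -E$, i.e.\ the sign of the canonical section) is the only gauge freedom left, so the $2n-1$ coframe consisting of the duals of the frame vectors on the symplectic space together with the two tautological forms on $\Sigma_D$ (the one dual to the characteristic direction and the one measuring the projective parameter shift) is canonical up to $\mathbb Z_2$. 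The structure equations then close: all derivatives of frame elements are expressed through the frame with coefficients that are functions on $\Sigma_D$ built from the $B_i$ and their derivatives, i.e.\ from the generalized Wilczynski invariants.

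The standard consequences follow from the existence of such a canonical coframe. Equivalence of two distributions in the class is equivalent to equivalence of their canonical coframes on $\Sigma_D$, by the usual Cartan argument (a coframe equivalence on the bundles descends to a diffeomorphism of base manifolds intertwining the distributions, because the bundle and all its structure were built functorially from $D$). The dimension bound $\dim\mathfrak{sym}(D)\le 2n-1$ is immediate: an infinitesimal symmetry of $D$ lifts to an infinitesimal symmetry of the canonical coframe on $\Sigma_D$, and a vector field preserving a coframe on a manifold of dimension $N$ is determined by its value at one point, so the symmetry algebra injects into $\mathbb R^{2n-1}$. Sharpness and the classification of the maximally symmetric model require identifying when all structure functions are constant: since the nonzero structure functions are (up to universal constants) the generalized Wilczynski invariants $\mathcal W_{2i}$, $1\le i\le n-4$, and their covariant derivatives, the coframe has all structure functions constant exactly on an open orbit, and the vanishing of all $\mathcal W_{2i}$ gives a flat model. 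One then checks by direct computation with the vector fields $X_1,X_2$ of \eqref{X1}--\eqref{X2} with all $r_i=0$ — equivalently the Monge equation $z'=(y^{(n-3)})^2$ — that $D_{(0,\dots,0)}$ has all generalized Wilczynski invariants identically zero (this is where Remark \ref{ratnormrem} enters: the Jacobi curves lie on rational normal curves), hence its symmetry algebra is exactly $2n-1$ dimensional, and conversely any distribution attaining the bound must have all $\mathcal W_{2i}\equiv 0$ and be locally equivalent to this one (a connected Lie group acting with all structure functions constant gives a homogeneous model unique up to the discrete data). The description of the symmetry algebra as a semidirect sum $\mathfrak{gl}(2,\mathbb R)\ltimes\mathfrak n_{2n-5}$ with $\mathfrak{gl}(2,\mathbb R)$ acting irreducibly on $\mathfrak n_{2n-5}/Z(\mathfrak n_{2n-5})$ is then read off from the explicit flat model: the $\mathfrak{gl}(2,\mathbb R)$ part comes from the projective reparametrizations together with the Euler scaling, and the Heisenberg part from the obvious polynomial symmetries of the Monge equation $z'=(y^{(n-3)})^2$ (translations in $x, y_0,\dots,y_{n-3}, z$ and the prolonged "shearing" symmetries), with the bracket $[\,\cdot\,,\cdot\,]$ landing in the $z$-translation giving the Heisenberg center; irreducibility of the $\mathfrak{sl}_2$-action on the $2(n-3)$-dimensional complement is exactly the statement that this is the irreducible representation of dimension $2(n-3)=\dim W_\gamma$, as in Proposition \ref{Wilczexceptrem}.

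The main obstacle is the normalization step that produces the canonical coframe: one must verify that, after choosing projective parametrizations, no further normalization is possible, i.e.\ that the structure group has genuinely been reduced to $\mathbb Z_2$, and that the resulting structure equations actually close on $\Sigma_D$ rather than requiring a further prolongation. Concretely, this means carrying out the differentiation of the moving frame $\{d^i E/dt^i\}$ both along the characteristic flow and along the two fiber directions of $\Sigma_D$, and checking that every structure function is expressible through the coframe and the $B_i$'s — in particular that the reparametrization rule \eqref{rhorep}, which is what forces the projective normalization, together with the scaling rule \eqref{EE}, exhausts the gauge freedom so that the only ambiguity left is $E\mapsto -E$. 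This is precisely the content of the computations in \cite{doubzel1, doubzel2}, and here I would only indicate the shape of the argument and refer to those papers for the lengthy but routine verification; the conceptual input beyond the classical Wilczynski theory is merely the observation that the characteristic curves of $(D^2)^\perp\setminus(D^3)^\perp$ carry no extra structure in the flat case, so that the projective normalization is the end of the reduction. The remaining parts — the equivalence statement, the dimension bound, sharpness, and the identification of the symmetry algebra — are then formal or reduce to an explicit check on the single model $D_{(0,\dots,0)}$.
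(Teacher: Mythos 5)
The paper offers no proof of Theorem \ref{frtheor}: it is stated explicitly as a summary of the results of \cite{doubzel1,doubzel2}, and your outline reproduces precisely the strategy of those papers as rehearsed in Sections \ref{abnsec}--\ref{unparsec} (reduction to self-dual curves in $\mathbb P W_\gamma$, the bundle $\Sigma_D$ of projective parametrizations, the moving frame built from the canonical section $E$ and its derivatives), deferring the same technical verifications to the same references. The one point worth tightening is your claim that attaining the bound forces $\mathcal W_{2i}\equiv 0$: the clean argument is that each $\mathcal W_{2i}$ is a structure function of nonzero weight under the two-dimensional fiber group of $\Sigma_D$ (it rescales by $(\upsilon')^{2i+2}$ under reparametrization, by \eqref{willy}), so the constancy along the fibers entailed by transitivity of a $(2n-1)$-dimensional symmetry group on $\Sigma_D$ forces it to vanish rather than merely to be constant.
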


\section{Canonical frames for rank 2 distributions of maximal class with distinguish parametrization on abnormal extremals}
\label{canfrparsec}
\setcounter{equation}{0}

Now assume that at least one generalized Wilczynski invariant of rank $2$ distribution $D$ of maximal class does not vanish. Let $i_0$  be the minimal integer such
 that $\mathcal W_{2i_0}$ on some (open) subset $\widetilde{\mathcal R}$ of $\mathcal R_D$. Then on each segment  $\gamma$ of abnormal extremals lying in $U$ we can choose the unique, up to a shift and the change of the orientation,  parametrization $\varphi:I\mapsto \gamma$ such that
 % if $\psi=\varphi^{-1}$ then
 \begin{equation}
 \label{canparam}
 \mathcal W_{2i_0}\bigl(\psi(t)\bigl)\bigl(\psi^\prime(t)\bigr)\equiv \varepsilon,
 \end{equation}
 where $\varepsilon=1$ if $\mathcal W_{2i_0}>0$  and  $\varepsilon=-1$ if $\mathcal W_{2i_0}<0$.
 Here by $\mathcal W_{2i_0}(\lambda)(v)$, where $\lambda\in U$ and $v$ is the tangent vector at $\lambda$ to the abnormal extremal  $\gamma$ passing to $\lambda$, we mean the generalized Wilczynski invariant $\mathcal W_{2i_0}$ at $\lambda$ evaluated at $v$.
 %The parametrization $\psi$ is called the \emph{canonical parametrization of the abnormal extremal $\gamma$.}

 Moreover, we also can fix the orientation on the curve. For this note that since the curve $J_\gamma^{(4-n)}$ is self-dual,  given a parametrization $\psi$ on $\gamma$, among all canonical sections of the tautological bundle $C(J_\gamma^{(4-n)})$ (defined up to the
multiplication by a nonzero constant)
there exists the unique, up to a sign, section $E$ of  such that \eqref{last0} holds and
\begin{equation}
\label{norme1}
\left|\tilde\sigma_\gamma\left(\frac{d^{n-3}}{dt^{n-3}}E\bigl(\psi(t)\bigr),\frac{d^{n-4}}{dt^{n-4}}
E\bigl(\psi(t)\bigr)\right)\right|\equiv 1.
\end{equation}
This section $E$ will be called the \emph{ strongly canonical section} of $C(J_\gamma^{(4-n)})$ with respect to the parametrization $\psi$.
The parametrization $\psi$ is called the \emph{canonical parametrization of the abnormal extremal $\gamma$} if \eqref{canparam} holds and
\begin{equation}
\label{norme1orient}
\tilde\sigma_\gamma\left(\frac{d^{n-3}}{dt^{n-3}}E\bigl(\psi(t)\bigr),\frac{d^{n-4}}{dt^{n-4}}
E\bigl(\psi(t)\bigr)\right)\equiv 1.
\end{equation}

 Another situation when a special parametrization, up to  a shift, can be distinguished is the case of regular control systems on rank $2$ distributions in the sense Definition \ref{regcontdef}.
 Let  $\mathcal V_q$ be the set of admissible velocities of the control system under consideration at the point $q \in M$.
 Let $\widehat {\mathcal R}$ be  a subset of $\mathcal R_D$ consisting of all points $\lambda$ such that the image under $d\pi$ of the tangent line at $\lambda$ to the abnormal extremal passing through $\lambda$ is a regular line in $D\bigl(\pi(\lambda)\bigr)$ in the sense of Definition \ref{reglinedef} (here , as before  $\pi:T^*M\rightarrow M$ is the canonical projection).
 Then by Definition \ref{regcontdef} and Remark \ref{openrem} the set $\widehat {\mathcal R}$ is a non-empty open subset of $(D^2)^\perp$.
 Given a regular line $L$ in $D(q)$ let $w(L)$ be the admissible velocity in $L$ of the smallest norm. Clearly $w(L)$ does not depend on the choice of a norm in $D(q)$, but in general it may be defined up to a sign (for example, in the sub-(pseudo) Riemannian case).

 A parametrization $\psi:I\mapsto \gamma$ of an abnormal extremal $\gamma$ living in $\widehat {\mathcal R}$ is called \emph{ weakly canonical (with respect to the regular control system given by the set of admissible velocities $\{\mathcal V_q\}_{q\in M}$)}  if
 \begin{equation}
 \label{canparaff}
 d\pi\bigl(\frac{d}{dt}\gamma(\psi(t))\bigr)=w\Bigl({\rm   span}\,d\pi\bigl(\frac{d}{dt}\gamma(\psi(t))\bigr)\Bigr)
 \end{equation}
 %, where $\psi=\varphi^{-1}$.
This parametrization is defined up to a shift and maybe up to the change of orientation. In the case when the orientation is not fixed by \eqref{canparaff} we can fix it by imposing
the condition \eqref{norme1orient}. In any case we finally obtain the parametrization of $\gamma$ defined up to a shift only. This parametrization of $\gamma$ is called \emph{canonical (with respect to the regular control system given by the set of admissible velocities $\{\mathcal V_q\}_{q\in M}$)}.

Finally let $\widetilde {\mathcal R}$ be a subset of $\widehat {\mathcal R}$ where the vector field consisting of the tangent vectors to the abnormal extremals parameterized by the canonical parameter is smooth.
Note that $\widetilde {\mathcal R}$ is an open and dense subset of $\widehat {\mathcal R}$. For affine control systems with one input and a non-zero drift and for sub-Riemannian structures $\widehat {\mathcal R}$ coincides with the set $\mathcal R_D$ of the regular points in $(D^2)^\perp\backslash(D^3)^\perp$.

%The parametrization is called \ref{canonical
% the orientation is fixed the obtained oarametrization

Note that in all cases the canonical parametrization is preserved by the homotheties of the fibers of $(D^2)^\perp$. Namely,
if $\delta_s$ is the flow of homotheties on
 the fibers of $T^*M$:
%\begin{equation}
%\label{homoth}
$\delta_s(p,q)=(e^sp,q),\quad q\in M,\,\,
p\in T_q^*M$ or, equivalently, the flow generated by the Euler field $e$ generates this flow, then
%we have the gollowing property
%{\bf Property H}
$\psi:I\mapsto \gamma$ is the canonical parametrization on an abnormal extremal $\gamma$ if and only if
$\delta_{s}\circ\psi$ is the canonical parametrization on the abnormal extremal $\delta_s\circ\gamma$.

%associated with the parameterized self-dual curve  $t\mapsto J_\gamma^{(4-n)}\bigl(\psi(t)\bigr)$. Note that $E$ is normal

The main goal of this section is to prove the following
 \begin{theorem}
\label{theor1par}
Given either a rank 2 distribution $D$ of maximal class with at least one nonvanishing  generalized Wilczynski invariant
%on a subset $U$ of the set of the regular point $\mathcal R_D$
or a regular control system on a rank 2 distribution $D$ of maximal class (even with the identically vanishing Wilczynski invariants), one can assign to such structure a canonical, up to the action of $\mathbb Z_2$, frame  on the set $\widetilde {\mathcal R}$ defined above so that two objects from the considered class are equivalent if and only if their canonical frames are equivalent.
%The dimension of pseudo-group of local symmetries of the considered objects does not exceed $2n-3$.
\end{theorem}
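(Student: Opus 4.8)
The plan is to reduce the equivalence problem to the construction of a moving frame on a principal bundle whose base is $\widetilde{\mathcal R}$, exploiting the fact that in both situations the abnormal extremals carry a parametrization that is canonical up to a shift (and, after fixing orientation via \eqref{norme1orient}, up to a $\mathbb Z_2$-action only). First I would fix, for each abnormal extremal $\gamma$ living in $\widetilde{\mathcal R}$, the canonical parametrization $\psi$ and the associated strongly canonical section $E$ of the tautological bundle $C(J_\gamma^{(4-n)})$, so that both \eqref{last0} and \eqref{norme1orient} hold. Differentiating $E(\psi(t))$ up to order $2m-1=2n-7$ produces a distinguished basis $\bigl\{\tfrac{d^i}{dt^i}E(\psi(t))\bigr\}_{i=0}^{2n-7}$ of $W_\gamma$; this is where the reduction to curves in projective spaces of section \ref{projsec} pays off, because all the data of the Jacobi curve is encoded in the coefficients $B_i(t)$ of \eqref{last0}, and these $B_i$ are now honest functions on $\widetilde{\mathcal R}$ once the parametrization is pinned down.

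Next I would lift this distinguished basis from the fibers $W_\gamma$ back to $T^*M$, i.e. to $(D^2)^\perp\backslash(D^3)^\perp$, by choosing preimages under the projection $\phi$ of \eqref{phi} compatible with the Euler field $e$ (recall $\bar e=\phi_*e$ lies in every Jacobi subspace, and by the last displayed remark of section \ref{canfrparsec} the canonical parametrization is equivariant under the homothety flow $\delta_s$). Together with $e$ itself, the characteristic vector field $\mathcal C$ written via \eqref{foli25} in the canonical parametrization, and the tautological/Liouville data, one obtains a frame on an open subset of $(D^2)^\perp$. Counting dimensions: $\dim (D^2)^\perp = 2n-3$, and since the canonical parametrization is now fixed up to a shift the bundle on which the frame is most naturally defined is $\widetilde{\mathcal R}$ itself, of dimension $2n-3$ — no extra fiber of M\"obius transformations is needed, in contrast with Theorem \ref{frtheor} where $\dim\Sigma_D=2n-1$. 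I would then verify that this frame is canonical: every choice made (the section $E$, the parametrization $\psi$, the lifts) is determined by the structure up to the residual $\mathbb Z_2$ coming from the sign ambiguity in $E$ and, in the control-system case, from $w(L)$ being defined only up to a sign — these two $\mathbb Z_2$'s are pinned together by \eqref{norme1orient}, leaving a single overall $\mathbb Z_2$.

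The formal equivalence statement then follows from the standard principle: a diffeomorphism $\Phi$ of the underlying manifolds intertwines the two structures if and only if its natural lift to $T^*M$ maps $(D^2)^\perp\backslash(D^3)^\perp$ to the corresponding set, preserves the characteristic foliation and the symplectic form, hence carries Jacobi curves to Jacobi curves; since all ingredients of the frame (canonical parametrization, strongly canonical section, skew-orthogonal complements, osculations) are built functorially from $D$ and the extra structure, $\Phi$ preserves the structure exactly when it carries one canonical frame to the other, up to the $\mathbb Z_2$. Conversely a frame equivalence integrates, by the uniqueness theorem for absolute parallelisms (an $e$-structure), to a local diffeomorphism of the bases which descends to $M$.

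I expect the main obstacle to be the well-definedness and smoothness of the frame across $\widetilde{\mathcal R}$, and in particular checking that the passage between the two defining situations — distribution with nonvanishing $\mathcal W_{2i_0}$ versus regular control system — really produces frames of the same shape. In the first case the parametrization is fixed by the normalization \eqref{canparam} of a Wilczynski invariant, which is intrinsic to $D$; in the second it is fixed by the geometric condition \eqref{canparaff} involving the admissible velocities, which is genuinely extra data and may force a shrinking to the dense open set $\widetilde{\mathcal R}\subset\widehat{\mathcal R}$ where the tangent vector field to canonically parametrized extremals is smooth. Showing that in both cases one lands on the same $(2n-3)$-dimensional frame bundle, with the same structure equations up to the coefficients $B_i$ (respectively one extra function recording the control system), is the delicate bookkeeping step; the underlying differential-geometric machinery is already supplied by sections \ref{abnsec}--\ref{unparsec}, so the work is in organizing the normalizations rather than in any new hard estimate.
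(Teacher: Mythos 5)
Your overall strategy --- fix the canonical parametrization and the strongly canonical section $E$, lift the resulting distinguished basis of $W_\gamma$ back to $(D^2)^\perp$, and obtain an absolute parallelism on the $(2n-3)$-dimensional set $\widetilde{\mathcal R}$ --- is the same in spirit as the paper's. But there is a genuine gap at the lifting step, and it is precisely where the real work of the proof lies. A vector of $W_\gamma$ is an equivalence class modulo $\mathbb R\bar e$, and its preimage under $d\phi$ in $T_\lambda(D^2)^\perp$ is defined only modulo ${\rm span}\{e(\lambda)\}\oplus\mathcal C(\lambda)$. Requiring the lift to be vertical (a section of $V$) removes the $\mathcal C$-ambiguity, but the lift $\varepsilon_1$ of $E$ is still only defined modulo the Euler field: the whole family $\pm\varepsilon_1+\mu e$, with $\mu$ an arbitrary function, satisfies your requirements. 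Saying the preimages are chosen ``compatible with the Euler field'' does not single out a representative, so your frame is not yet canonical and the $\mathbb Z_2$ count is premature. The paper resolves this with an additional normalization (Lemma \ref{normlem}): among all admissible lifts there is a unique one, up to sign, with $\bigl[\varepsilon_1,[h,\varepsilon_1]\bigr]\in{\rm span}\{e,h,\varepsilon_1\}$; the proof of that lemma uses the commutation relations $[e,h]=0$ and $[e,\varepsilon_1]=-\tfrac12\varepsilon_1$ mod $e$ (the latter coming from the normalization \eqref{norme1}) to show that the coefficient $k$ in \eqref{prenorm} shifts by $\mp\mu/2$ under the change of lift, so a unique $\mu$ kills it. Without some such condition your construction produces a family of frames parametrized by a function, not a frame. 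The paper also generates the remaining frame vectors as $({\rm ad}\,h)^i\varepsilon_1$ rather than by lifting each derivative $\tfrac{d^i}{dt^i}E$ separately, which is what makes the linear independence follow from \eqref{adC}.

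A second, smaller omission: for the ``if and only if'' clause you invoke the standard $e$-structure argument, but a diffeomorphism of $\widetilde{\mathcal R}$ matching the frames must be shown to descend to $M$ and to carry one structure to the other. This is why the paper replaces $({\rm ad}\,h)^i\varepsilon_1$ for $1\leq i\leq n-4$ by their vertical components with respect to the splitting \eqref{Jisplit}: the modified frame contains a basis of the vertical distribution $V$, which forces any frame equivalence to preserve the fibers of $\pi$ and hence to project to a diffeomorphism of $M$. Your proposal asserts the descent without providing this mechanism.
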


\begin{proof}
First, let $h$ be the vector field consisting of the tangent vectors to the abnormal extremals parameterized by the canonical parameter.
%In the case of  rank 2 distribution $D$ of maximal class such with at least one nonvanishing  generalized Wilczynski invariants $h$ is defined up to a sign.

Second, given $\lambda \in (D^2)^\perp$ denote by $V(\lambda)$ the tangent space to the fiber of the bundle $\pi\colon (D^2)^\perp\mapsto M$
(the vertical subspace of $T_\lambda (D^2)^\perp$),
\begin{equation}
\label{verteq}
V(\lambda)= \{v\in T_\lambda
(D^2)^\perp,\pi_*v=0\}.
\end{equation}
It is easy to show (\cite{doubzel2, zelvar}) that
\begin{equation}
\label{J-1}
d\phi \bigl(V(\lambda)\oplus \mathcal C(\lambda)\bigr)=J_\gamma^{(-1)}(\lambda)\quad {\rm mod}\,\mathbb R\bar e,
\end{equation}
where $\phi$ is as in \eqref{phi}, $\bar e=\phi_* e$ with $e$ being the Euler field, and $\gamma$ is the abnormal extremal passing through $\lambda$. Define also the following subspaces of $T_\lambda (D^2)^\perp$:

\begin{equation}
\label{calJi}
\mathcal J^{(i)}(\lambda)=\{w\in T_\lambda (D^2)^\perp: d\phi(w)\in J_\gamma^{(i)}(\lambda)\,\,{\rm mod}\,\mathbb R\bar e\}.
\end{equation}

Directly from the definition, if $\lambda \in \mathcal R_D$, then
\begin{equation}
\label{adC}
[\mathcal C, \mathcal J^{(i)}](\lambda)=\mathcal J^{(i+1)}(\lambda).
\end{equation}
Also, if
$V^{(i)}(\lambda)=V(\lambda)\cap\mathcal J^{(i)}(\lambda)$, then
\begin{equation}
\label{Jisplit}
\mathcal J^{(i)}(\lambda)=V^{(i)}(\lambda)\oplus\mathcal C(\lambda) \quad \forall i\leq 0.
\end{equation}

Moreover, it can be shown (\cite[Lemma 2]{doubzel2}) that
\begin{equation}
\label{VV}
[V^{(i)}, V^{(i)}]\subseteq V^{(i)},\quad [V^{(i)},\mathcal J^{(i)}]\subseteq \mathcal J^{(i)}, \quad \forall i\leq 0.
\end{equation}

Let, $E$ be the strongly canonical section of $C(J_\gamma^{(4-n)})$ with respect to the canonical parametrization $\psi$ of the abnormal extremal $\gamma$ (as defined by \eqref{norme1}).
%associated with the parameterized curve  $t\mapsto J_\gamma^{(4-n)}\bigl(\psi(t)\bigr)$.
Then \eqref{J-1} implies that a vector field $\epsilon_1$ such that

\begin{enumerate}
\item[(A1)]
$d\phi\bigl(\varepsilon_1(\lambda)\bigr)\equiv E \,\,\,{\rm mod}\,\bar e$,
\item[(A2)]
$\varepsilon_1$ is the section of the vertical distribution $V$
\end{enumerate}
is defined modulo the Euler field $e$. Note that conditions (A1) and (A2) also imply that $\varepsilon_1$ is the section of $V^{(4-n)}$.

\begin{lemma}
\label{normlem}
Among all vector fields $\varepsilon_1$ satisfying conditions (A1) and (A2), there exists the unique, up to a multiplication by $-1$, vector field such that
\begin{equation}
\label{normcond}
\bigl[\varepsilon_1, [h, \varepsilon_1]\bigr](\lambda)\in{\rm span}\{e(\lambda),h(\lambda),\varepsilon_1(\lambda)\}.
\end{equation}
\end{lemma}
\begin{proof}
Let $\tilde\varepsilon_1$ be a vector field satisfying the conditions (A1) and (A2). Then $\tilde\varepsilon_1$ is the section of $V^{(4-n)}$. Using  \eqref{Jisplit} and \eqref{VV} for $n>5$ and also the definition of $\mathcal J$ given by \eqref{prejac} in the case $n=5$, we get
\begin{equation}
\label{prenorm}
\bigl[\tilde\varepsilon_1, [h, \tilde\varepsilon_1]\bigr]\equiv k[h,\tilde\varepsilon_1]\,\,\rm{mod}\,{\rm span}\{e,h,\tilde\varepsilon_1\}
\end{equation}
for some function $k$.
Now let $\varepsilon_1$ be another vector field satisfying conditions (A1) and (A2). Then by above there exists a function $\mu$
such that
\begin{equation}
\label{transe1}
\varepsilon_1=\pm \tilde\varepsilon_1+\mu e.
\end{equation}

From the fact that the canonical parametrization is preserved by the homotheties of the fibers of $(D^2)^\perp$ it follows that $[e,h]=0$ . Also from the normalization condition \eqref{norme1} it is easy to get that
\begin{equation}
\label{ee1}
[e,\varepsilon_1]=-\cfrac{1}{2} \varepsilon_1 \,\,{\rm mod}\,{\rm span}(e).
\end{equation}
Then
 \begin{equation}
\label{ee1}
\bigl[e,[h,\varepsilon_1]\bigr]=-\cfrac{1}{2} [h,\varepsilon_1] \,\,{\rm mod}(e,h),
\end{equation}
From this and \eqref{transe1} it follows that
\begin{equation}
\label{prenorm1}
\bigl[\varepsilon_1, [h, \varepsilon_1]\bigr]\equiv \bigl(k\mp \cfrac{\mu}{2}\bigr)[h,\varepsilon_1]\,\,{\rm span}\{e,h,\varepsilon_1\},
\end{equation}
which implies the statement of the lemma: the required vector $\tilde\varepsilon_1$  is obtained by taking $\mu=\pm 2k$. $\Box$
\end{proof}

Now we are ready to construct the canonical frame on the set $\widetilde{\mathcal R}$. One option is to take as a canonical frame the following one:
\begin{equation}
\label{canframe1}
\bigl\{e,h,\varepsilon_1,\{({\rm ad}\, h)^i\varepsilon_1\}_{i=1}^{2n-7},[\varepsilon_1,({\rm ad}\, h)^{2n-7}\varepsilon_1]\bigr\},
\end{equation}
where $\varepsilon_1$ is as in Lemma \ref{normlem}. Let us explain why it is indeed a frame. First the vector fields $\bigl\{e,h,\varepsilon_1,\{({\rm ad}\, h)^i\varepsilon_1\}_{i=1}^{2n-7}\bigr\}$ are linearly independent on $\widetilde{\mathcal R}$ due to the relation \eqref{adC}. Besides $[\varepsilon_1,({\rm ad}\, h)^{2n-7}\varepsilon_1](\lambda)\notin \mathcal J^{(n-3)}(\lambda)$. Otherwise, $\varepsilon_1(\lambda)$ belongs to the kernel of the form $\sigma(\lambda)|_{(D^2)^\perp}$ and therefore it must be collinear to $h$. We get a contradiction. Therefore the tuple of vectors in \eqref{canframe1} constitute a frame on $\widetilde{\mathcal R}$.

The construction of the frame \eqref{canframe1} is intrinsic.
However, in order to guaranty that two objects from the considered class are equivalent if and only if their canonical frames are equivalent, we have to modify this frame such that it will contain the basis of the vertical distribution $V$ (defined by \ref{verteq}). For this, replace the vector fields of the form $(ad h)^i\varepsilon_1$ for $1\leq i\leq n-4$ by their projections  to $V^{(i)}$ with respect to the splitting \eqref{Jisplit}, i.e. their vertical components with respect to this splitting. This completes the construction of the required canonical frame (defined up to the action of the required finite groups). The proof of Theorem \ref{theor1par} is completed.
%on $U$ or $\mathcal R_D$ equal to the velocity
\end{proof}

As a direct consequence of Theorem \ref{theor1par} we have
\begin{corollary}
\label{cor1}
For a rank 2 distribution $D$ of maximal class with at least one nonvanishing  generalized Wilczynski invariant
%on a subset $U$ of the set of the regular point $\mathcal R_D$
or a regular control system on a rank 2 distribution $D$ of maximal class (even with the vanishing Wilczynski invariants) the dimension of pseudo-group of local symmetries does not exceed $2n-3$.
\end{corollary}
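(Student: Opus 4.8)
The plan is to deduce the bound directly from the canonical frame constructed in Theorem~\ref{theor1par}, combined with the classical rigidity of absolute parallelisms: the automorphism pseudo-group of an $e$-structure on a $d$-dimensional manifold has dimension at most $d$. Since $(D^2)^\perp$ is cut out in the $2n$-dimensional manifold $T^*M$ by the three equations $u_1=u_2=u_3=0$, it has dimension $2n-3$, and the open subset $\widetilde{\mathcal R}$ on which the canonical frame lives has the same dimension; so once the reduction is set up the estimate is immediate.

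First I would lift symmetries to $T^*M$. If $F$ is a local diffeomorphism of $M$ preserving $D$ (and, in the control-system case, preserving in addition the field of admissible velocities $\{\mathcal V_q\}$), then its cotangent lift $\widehat F=(F^{-1})^{*}$ is a symplectomorphism of $T^*M$ commuting with the flow of homotheties, hence with the Euler field $e$, and preserving the submanifolds $(D^2)^\perp$ and $(D^3)^\perp$. By \eqref{foli25} it therefore preserves the characteristic line distribution $\mathcal C$ and its $1$-foliation by abnormal extremals, hence the set $\mathcal R_D$ of regular points. Because the generalized Wilczynski invariant $\mathcal W_{2i_0}$ (in the first situation) and the assignment $L\mapsto w(L)$ of the admissible velocity of smallest norm (in the second) are attached intrinsically to $D$, respectively to the control system, $\widehat F$ carries the canonical parametrization of each abnormal extremal, characterized by \eqref{canparam} respectively \eqref{canparaff}, to the canonical parametrization of its image; thus it preserves the vector field $h$ of canonically parametrized tangent vectors and the open set $\widetilde{\mathcal R}$. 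Since every ingredient entering the frame \eqref{canframe1} (the splitting \eqref{Jisplit}, the strongly canonical section $E$, the normalization of Lemma~\ref{normlem}) is obtained from $e$, $h$, $\mathcal C$ and the symplectic form by intrinsic operations, $\widehat F$ maps the canonical frame on $\widetilde{\mathcal R}$ to itself, up to the residual $\mathbb Z_2$-ambiguity. Moreover $F\mapsto\widehat F$ is injective, since a cotangent lift determines its base diffeomorphism.

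It then remains to invoke the standard fact about $e$-structures. A local diffeomorphism of $\widetilde{\mathcal R}$ preserving the canonical frame $\{f_1,\dots,f_{2n-3}\}$ intertwines the local flows of the $f_j$, and iterated compositions of these flows issued from a fixed point $p_0$ fill a neighbourhood of $p_0$; hence such a diffeomorphism is uniquely determined by the single value $\widehat F(p_0)$. The finite $\mathbb Z_2$-ambiguity in the frame only turns this injectivity into finiteness of the fibres, which does not affect dimensions. Consequently the evaluation $\widehat F\mapsto\widehat F(p_0)$ is (finite-to-one and in particular) immersive on the pseudo-group of lifted symmetries, which is therefore of dimension at most $\dim\widetilde{\mathcal R}=2n-3$; composing with the injection $F\mapsto\widehat F$ yields the asserted bound for the pseudo-group of local symmetries of the original structure.

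The one point requiring genuine care — the main obstacle — is verifying that the construction of Theorem~\ref{theor1par} is truly functorial with respect to symmetries, i.e.\ that none of the auxiliary choices (the orientation fixed via \eqref{norme1orient}, the sign of $E$, the $\pm$ in Lemma~\ref{normlem}) introduces more than a finite ambiguity under the action of $\widehat F$. Once this functoriality is checked, the argument is the textbook one for absolute parallelisms.
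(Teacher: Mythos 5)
Your proposal is correct and follows the same route as the paper, which simply states the corollary as a direct consequence of Theorem~\ref{theor1par}: the canonical frame is an absolute parallelism on the $(2n-3)$-dimensional set $\widetilde{\mathcal R}$, and a frame-preserving (up to the finite $\mathbb Z_2$-ambiguity) diffeomorphism is determined by its value at one point, bounding the symmetry pseudo-group by $\dim\widetilde{\mathcal R}=2n-3$. Your write-up merely makes explicit the lifting of base symmetries to $T^*M$ and the functoriality of the frame construction, details the paper leaves implicit.
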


\section{Symplectic curvatures for the structures under consideration}
\label{sympsec}
\setcounter{equation}{0}

Before proving Theorems \ref{symmodthm1} and \ref{symmodthm0} about the most symmetric models for geometric structures under consideration,
%rank 2 distributions with nonzero generalized Wilczynski invariants and affine control systems with one input
%on them,
we want to reformulate this theorem in more geometric  terms. For this we distinguish special invariants for this structures, called the \emph{symplectic curvatures}. In contrast to the generalized Wilczynski invariants they are functions on the open subset $\widetilde {\mathcal R}$ of  $\mathcal R_D$, defined in the beginning of the previous section.

\subsection{The case of regular control systems}
\label{affsbsec}

In this case all curves $J_\gamma^{(4-n)}$ are parameterized by the canonical (up to a shift) parametrization $\psi$ given by \eqref{canparaff} (and maybe also by \eqref{norme1orient}) .
The geometry of parameterized regular self-dual curves in projective spaces  is simpler than of unparametrized ones: instead of forms on the curve we obtain invariant, which are scalar-valued  function on the curve (\cite{zelrank1}). The main result of \cite{zelrank1} (Theorem 2 there) can be reformulated as follows (see also \cite{kwessi}: if $E$ is a (strongly) canonical section of $C(J_\gamma^{(4-n)})$ with respect to the (canonical) parametrization $\psi$, then there exist $m$ functions $\rho_1(t),\ldots,\rho_m(t)$ such that
\begin{equation}
\label{selfeq}
E^{(2m)}\bigl(\psi(t)\bigr)=\sum_{i=1}^{m}(-1)^{i+1}\cfrac{d^{m-i}}{dt^{m-i}}\Bigl(\rho_{i}(t) \cfrac{d^{m-i}}{dt^{m-i}}E\bigl(\psi(t)\bigr)\Bigr).
\end{equation}
Note that formula \eqref{selfeq} resembles the classical normal form for the formally self-adjoint linear differential operators \cite{naimark}[\S 1].

By constructions, the functions $\rho_1(t),\ldots,\rho_m(t)$ are invariants of the parameterized curve $t\mapsto J_\gamma^{(4-n)}\bigl(\psi(t)\bigl)$ with respect to the action of the linear symplectic group on $W_\gamma$. We call the function $\rho_i(t)$ the \emph{$i$th symplectic curvature of the parametrized curve $t\mapsto J_\gamma^{(4-n)}\bigl(\psi(t)\bigr)$.}
Besides, the functions $\rho_1(t),\ldots,\rho_m(t)$ constitute the fundamental system of symplectic invariant of the parametrized curve $t\mapsto J_\gamma^{(4-n)}\bigl(\psi(t)\bigr)$, i.e. they determine this curve uniquely up to a symplectic transformation. Moreover, these invariants are independent: for any tuple of $m$ functions $\rho_1(t),\ldots,\rho_m(t)$ on the interval $I\subseteq R$ there exists a parameterized regular self-dual curve $t\mapsto \Lambda(t)$, $t\in I$, in the projective space of dimension $2m-1$ with the $i$th symplectic curvature equal to $\rho_i(t)$ for any $1\leq i\leq m$.

Also in the sequel we will need the following
\begin{remark}
\label{symprodrem}
Assume that $E$ is the strongly canonical section of $C(J_\gamma^{(4-n)})$ with respect to the  parametrization $\psi$.
Using the fact that the spaces ${\rm  span}\Bigl \{\cfrac{d^j}{dt^j}E\bigl(\psi(t)\bigr)\Bigr\}_{j=1}^m$ are Lagrangian and the condition \eqref{norme1}, it is easy to show
that $$\tilde\sigma_\gamma\Bigl(\cfrac{d^j}{dt^j}E\bigl(\psi(t)\bigr), \cfrac{d^i}{dt^i}E\bigl(\psi(t)\bigr)\Bigr)$$ are either identically equal to $0$, if $i+j< 2m-1$ or to $\pm 1$, if $i+j=2m-1$, or they are polynomial expressions (with universal constant coefficients) with respect to the symplectic curvatures
$\rho_1(t),\ldots,\rho_m(t)$ and their derivatives, if $i+j>2m$. $\Box$
\end{remark}

Taking the $i$th symplectic curvature for Jacobi curves (parameterized by the canonical parameter) of all abnormal extremals living in $\widetilde{\mathcal R}$,  we obtain the invariants of the regular control systems, called the \emph {$i$th symplectic curvature}
 %of the affine control system}
 and denoted also by $\rho_i$. The symplectic curvatures are scalar valued functions on the set $\widetilde{\mathcal R}$.

\subsection{The case of rank 2 distributions with nonzero generalized Wilczynski invariants}
\label{wilczsubsec}

Assume that at least one generalized Wilczynski invariant of rank $2$ distribution $D$ of maximal class vanishes nowhere. Let $i_0$  be the minimal integer such
 that $\mathcal W_{2i_0}$ on some (open) subset $\widetilde{\mathcal R}$ of $\mathcal R_D$.
 In this case given an abnormal extremal $\gamma$ in $\widetilde{\mathcal R}$ the curve $J_\gamma^{(4-n)}$ are parameterized by the canonical, up to a shift, parametrization $\psi$ given by \eqref{canparam} and \eqref{norme1orient}. Let $\rho_i$ be the $i$th symplectic curvatures of the parametrized curve
$t\mapsto J_\gamma^{(4-n)}\bigl(\psi(t)\bigl)$. Then $\rho_i$ is an invariant of the (unparametrized) curve $J_\gamma^{(4-n)}$ with respect to the action of the linear symplectic group on $W_\gamma$ and a reparametrization of $\gamma$, because , in contrast to the previous case the parametrization on $\gamma$ is not a priori prescribed but determined by the unparametrized curve $J_\gamma^{(4-n)}$ itself. We say that $\rho_i$ is the \emph{$i$th symplectic curvature of the unparametrized curve $J_\gamma^{(4-n)}$}. By Remark \ref{ratnormrem} this notion is defined for all regular curves in projective spaces except the rational normal curve.

\begin{remark}
\label{rho1}
Note also that by \ref{rhorep} the first symplectic curvature $\rho_1$  is equal , up to the universal constant multiple $\frac{-m(4m^2-1)}{3}$, to the Schwarzian derivative of the transition function from the canonical parametrization of $\gamma$   to any projective parametrization of $\gamma$. The invariant $\rho_1$ coincides here, up to the universal constant, with the projective Ricci curvature of the unparametrized curve in a Lagrangian Grassmannian introduced in \cite{zelvar, phd}.$\Box$
\end{remark}

In contrast to the case considered in subsection \ref{affsbsec} the invariant $\rho_1,\ldots,\rho_m$ are dependent so that there are $i_0$ polynomial relations (with universal coefficients) between them and their derivatives (with respect to the canonical parametrization).  Therefore the following definition makes sense.

\begin{definition}
\label{comptdef}
The tuple of functions $\Bigl(r_1(\tau),\ldots,r_m(\tau)\Bigr)$ is called \emph{compatible} if there exists a regular self-dual curve $\Lambda$ in the projective space of dimension $2m-1$ with at least one nowhere zero Wilczynski invariant such that if $\rho_i$ is the $i$th symplectic curvature of the unparametrized curve $\Lambda$, then for all $1\leq i \leq m$ we have that $r_i(\tau)=\rho_i\bigl(\psi(\tau)\bigr)$ for the canonical parametrization $\psi$ of $\Lambda$.
\end{definition}

To explain why the invariants $\Bigl(r_1(\tau),\ldots,r_m(\tau)\Bigr)$ are dependent note that there is another way to construct the scalar-valued invariant of the unparametrized curve $J_\gamma^{(4-n)}$ (up to a symplectic transformation and
reparametrization) with the help of the Wilczynski invariants. Namely, for any $1\leq i\leq m-1$ let
\begin{equation}
\label{Ai}
 A_i(\psi(t))=\mathcal W_{2i}(\psi(t))\bigl(\psi'(t)\bigr)
\end{equation}
Then by the definition of $i_0$ and the canonical parametrization, $A_i\equiv 0$ for $1\leq i\leq i_0-1$ and $A_{i_0}=\varepsilon$, where $\varepsilon=1$ if $\mathcal W_{2i_0}>0$  and  $\varepsilon=-1$ if $\mathcal W_{2i_0}<0$. Using the transformation rule \eqref{EE} with $\psi$ being a projective parametrization of $\gamma$ and $\psi_1$ being the canonical parametrization of $\gamma$ and also the Remark \ref{rho1}, it can be shown that all $\rho_i$ with $1\leq i\leq i_0$ can be expressed as a certain polynomial in $\rho_1$ and its derivatives (with respect to the canonical parameter) having universal coefficients and with a free term equal to zero if $1\leq i<i_0$ and equal to $(-1)^{i_0-1}\varepsilon$ if $i=i_0$. For example, if $i_0=1$, i.e. the nontrivial Wilczynski invariant $W_2$ of the lowest order is non-zero, then there is only one relation
\begin{equation}
\label{i0=1}
\rho_2\bigl(\psi_1(\tau)\bigr)=-\varepsilon-\frac{3(2m-2)(2m-3)}{20}\frac{d^2}{d\tau^2}\rho_1\bigl(\psi(\tau)\bigr)+\frac{\alpha_{m,2}}{\alpha_{m,1}^2}\rho_1\bigl(\psi(\tau)\bigr)^2,
\end{equation}
where the constants $\alpha_{m,1}$ and $\alpha_{m,2}$ are given by formula \ref{c_iprop}. Therefore a tuple $\Bigl(r_1(\tau),\ldots,r_m(\tau)\Bigr)$
satisfying  relation \eqref{i0=1}, with $\rho_i\bigl(\psi(\tau)\bigr)$ is replaced by $r_i(\tau)$, is compatible.
 Relation \eqref{i0=1} is analogous to \cite[Lemma 5.1]{jac1}. The first relation for $i_0>1$ is obtained from \eqref{i0=1} by replacing $\varepsilon$ with $0$.

 The deduction of other relations in an explicit form in general needs an extra work and we will not do it here, because we do not need such explicit relations in the sequel. However, in the case when all invariants $\rho_i(\tau)$ are constants, $\rho_i(t)\equiv r_i$, $1\leq i\leq m$, there is much more elegant way to explain the role of the coefficients of polynomial \eqref{c_iprop} in the question of compatibility of the tuple $(r_1,\ldots,r_m)$, based on some elementary facts from the representation theory of the Lie algebra $\mathfrak{sl}_2$.

 Given a tuple of $m$ numbers $(r_1,\ldots,r_m)$ let $\tau\mapsto\Lambda_{(r_1,\ldots,r_m)}(\tau)$ , $\tau\in R$ be the parameterized self-dual curve in the $(2m-1)$-dimensional projective space $\mathbb P^{2m-1}$ with the $i$th symplectic curvature constantly equal to
$r_i$ for all $1\leq i\leq m$. Note that the closure of the curve $\Lambda_{(0,\ldots,0)}$ is the rational normal curve.

 \begin{proposition}
\label{Wilczexceptrem}
 The curve $\Lambda_{(r_1,\ldots,r_m)}$ has all Wilczynski invariants equal to zero if and only if the tuple
$(r_1,\ldots,r_m)$ is exceptional in the sense of Definition \ref{exceptdef}.
%Note that if the curve symplectic curvatures $\rho_i$ of the parameterized curve $t\mapsto J_\gamma^{(4-n)}(\psi(t)$ are constant and all Wilczynski invariants are zero then $\rho_i$ satisfy \eqref{rho1constant1} for all $1\leq i\leq n-3$. This together with \eqref{rho1constant1}-\eqref{rho1constant2}
%the tuple $(\rho_1,\ldots,\rho_{m})$ is exceptional in the sense of Definition \ref{exceptdef} if and only if all Whilczynski invariants of $J_\gamma^{(4-n)}$ vanish. $\Box$
%From formulas \eqref{rho1constant} applied for $i_0=m+1$ it follows that if $\rho_1$ is constant then all Wilczynski invariants vanish ientically if and only if
\end{proposition}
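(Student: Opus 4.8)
The plan is to convert the hypothesis of constant symplectic curvatures into a linear ordinary differential equation with constant coefficients, read off its characteristic polynomial, and then decide when the resulting curve is the rational normal curve purely in terms of the roots of that polynomial, invoking the representation theory of $\mathfrak{sl}_2$. Concretely, since the symplectic curvatures of $\Lambda_{(r_1,\dots,r_m)}$ are the constants $r_i$, equation \eqref{selfeq} for a strongly canonical section $E$ becomes $E^{(2m)}=\sum_{i=1}^m(-1)^{i+1}r_i\,E^{(2m-2i)}$, whose characteristic polynomial is exactly the polynomial in \eqref{charpoly}. Because $E,E',\dots,E^{(2m-1)}$ span the $2m$-dimensional space $W_\gamma$ at every point, the coordinates of $E$ in a fixed basis of $W_\gamma$ form a fundamental system of solutions, so $\Lambda_{(r_1,\dots,r_m)}$ is, up to a linear change of homogeneous coordinates, the curve $\tau\mapsto[g_1(\tau):\dots:g_{2m}(\tau)]$ built from any fundamental system $(g_j)$. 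By Remark \ref{ratnormrem}, the vanishing of all Wilczynski invariants is equivalent to this curve lying on the rational normal curve, and I would characterize the latter through the roots of \eqref{charpoly}.

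For the direction ``exceptional $\Rightarrow$ all invariants vanish'' I would compute directly. If $(r_1,\dots,r_m)$ is exceptional then, since \eqref{charpoly} has only even powers of $\lambda$, its roots are symmetric about the origin, and being also an arithmetic progression they are either all equal to $0$ or of the form $\pm c,\pm3c,\dots,\pm(2m-1)c$ with $c\neq0$. In the first case $1,\tau,\dots,\tau^{2m-1}$ is a fundamental system and the curve is the rational normal curve verbatim; in the second, ordering the exponential solutions by exponent and dividing by $e^{-(2m-1)c\tau}$ turns the curve into $[1:e^{2c\tau}:\dots:e^{2(2m-1)c\tau}]$, which the substitution $s=e^{2c\tau}$ identifies with $[1:s:\dots:s^{2m-1}]$ --- again the rational normal curve. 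Hence all Wilczynski invariants vanish.

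For the converse I would run the $\mathfrak{sl}_2$ argument anticipated just before the statement. Identify $W_\gamma$ with $\mathbb R^{2m}$ so that $(E,E',\dots,E^{(2m-1)})$ is a moving frame; then $E(\tau)=e^{\tau C}e_1$ up to a fixed linear map, where $C$ is the companion matrix of \eqref{charpoly}, and the curve is the one-parameter orbit $\tau\mapsto[e^{\tau C}e_1]$. If all Wilczynski invariants vanish this orbit is a Zariski-dense subset of a copy $R$ of the rational normal curve, so, translating the parameter, each $e^{\tau C}$ (conjugated by the projective equivalence) maps $R$ into itself, hence onto $R$, and therefore belongs to its automorphism group, which is $PGL_2$ acting on $\mathbb R^{2m}$ through the $2m$-dimensional irreducible representation of $\mathfrak{sl}_2$. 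Hence $C$ is, modulo scalar matrices, in the image of that representation; and the spectrum of any element in the image of an irreducible $\mathfrak{sl}_2$-representation is an arithmetic progression --- each such element is conjugate inside $\mathfrak{sl}_2$ to a multiple of a semisimple generator $h$, to a nilpotent generator $e$, or (over $\mathbb R$) to a generator of a rotation subgroup, all of which act with spectrum a scalar multiple (possibly zero, possibly imaginary) of $\{2m-1,2m-3,\dots,-(2m-1)\}$ --- so the spectrum of $C$, differing from it only by a common additive scalar, is an arithmetic progression as well. Since this spectrum equals the root set of \eqref{charpoly} and is symmetric about $0$, the tuple $(r_1,\dots,r_m)$ is exceptional.

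The step I expect to be the main obstacle lies in this converse: upgrading ``the orbit $\tau\mapsto[e^{\tau C}e_1]$ is contained in a rational normal curve'' to ``each $e^{\tau C}$ preserves the whole rational normal curve, hence lies in $PGL_2$'' --- that is, carefully handling the passage through the projective automorphism group and the scalar ambiguity in $C$ --- together with stating cleanly (the real elliptic case included) the elementary fact about spectra of elements in irreducible $\mathfrak{sl}_2$-representations. The reduction to the constant-coefficient ODE and the ``if'' direction are routine given the results recalled in subsection \ref{affsbsec}.
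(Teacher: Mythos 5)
Your proof is correct and follows essentially the same route as the paper's: both directions rest on viewing $\Lambda_{(r_1,\ldots,r_m)}$ as the one-parameter orbit of a matrix whose characteristic polynomial is \eqref{charpoly}, and on the fact that the symmetry algebra of the rational normal curve is the irreducibly embedded $\mathfrak{sl}_2$, whose elements have arithmetic-progression spectra. The only cosmetic difference is in the ``exceptional $\Rightarrow$ vanishing'' direction, where you exhibit an explicit fundamental system of exponentials (respectively powers of $\tau$) instead of the paper's Jordan-decomposition argument placing $X_{(r_1,\ldots,r_m)}$ in the image of the irreducible embedding; both work, provided the purely imaginary (elliptic) case is treated as you indicate.
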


\begin{proof}
Assume that the curve $\Lambda_{(r_1,\ldots,r_m)}$ has all Wilczynski invariants equal to zero. Then by Remark \ref{ratnormrem}  $\Lambda_{(r_1,\ldots,r_m)}$ belongs to a rational normal curve. It is well known \cite{wilch, doub3, flag2} that the algebra of infinitesimal symmetries of the rational normal curve (with respect to the action of $SL_{2m}$) is isomorphic to $\mathfrak{sl}_2$ and it is actually equal to the image of the irreducible
embedding of $\mathfrak{sl}_2$ into $\mathfrak{sl}_{2m}$.

To describe these infinitesimal symmetries note that $\mathbb R^{2m}$ can be identified with the projectivization of the space of homogeneous binary polynomials of degree $2m-1$ (say in variables $x_1$ and $x_2$), $\mathbb R^{2m}\cong {\rm Sym}^{2m-1}(\mathbb R^2)$.  With this identification, the standard action of the group $SL_2$ (the algebra $\mathfrak{sl}_2$) on $\mathbb R^2$ with coordinates $(x_1,x_2)$ induces the standard irreducible representation of  $SL_2$ ($\mathfrak{sl}_2$)  into $SL_{2m}$ (${\mathfrak sl}_{2m}$)
%(and actually into $Sp_{2m}$ (${\mathfrak sp_{2m}$).
%on the rational normal curve in $\mathbb P^{2m-1}$.
 The rational normal curve $\bar \Lambda_{(0,\ldots,0)}$, up to a projective transformation, is the projectivization of the binary polynomials which are the $(2m-1)$th power of the linear forms in $x_1$ and $x_2$ . Therefore  any element of the image of the standard irreducible representation of $SL_2$ in $SL_{2m}$ preserves the curve $\bar \Lambda_{(0,\ldots,0)}$ and any element of the image of the standard irreducible representation of $\mathfrak{sl}_2$ in $\mathfrak{sl}_{2m}$ defines an infinitesimal symmetry of this curve. Moreover, it can be shown that there are no other infinitesimal symmetries of this curve.
%The group of symmetries of the rational normal curve is equal to the image of this representation and the algebra of infinitesimal symmetries of this curve is indeed  equal to the image of the irreducible
%embedding of $\mathfrak{sl}_2(\mathbb R)$ into $\mathfrak{sl}_{2m}$ induced by the standard action of $\mathfrak {sl}_2$ on $\mathbb R^2$.

By the standard theory of $\mathfrak{sl}_2$-representations (\cite[\S 11.1]{fult}) the image under this representation of any diagonizable element of $\mathfrak {sl}_2$ has the spectrum of the form
\begin{equation}
\label{arithm}
\{-(2m-1)r,-(2m-3)r,\ldots, -r, r,\ldots, (2m-3)r, (2m-1)r\}.
\end{equation}
(where over $\mathbb R$ the number $r$ is either real or purely imaginary), which is an arithmetic progression symmetric with respect to the origin. Besides any element of $\mathfrak{sl}_2$ can be brought to the triangular form (over $\mathbb C$), therefore its image under the aforementioned embedding has also the spectrum of the form \eqref{arithm}. In other words, \emph{the spectrum of any infinitesimal symmetry of $\Lambda_{(r_1,\ldots,r_m)}$ is an arithmetic progression symmetric with respect to the origin.}.

On the other hand, since all symplectic invariants of the curve $\tau\mapsto\Lambda_{(r_1,\ldots,r_m)}(\tau)$ are constants, this curve belongs to the orbit of the one-parametric group generated by an element $X_{(r_1,\ldots,r_m)}$ of the symplectic algebra (for the explicit form of $X_{r_1,\ldots,r_m}$
see \cite{zelrank1}, where it is exactly the matrix in the structure equation for the canonical moving frame of the curve $\tau\mapsto\Lambda_{(r_1,\ldots,r_m)}(\tau)$).
Therefore $X_{(r_1,\ldots,r_m)}$ belongs to the
algebra of infinitesimal symmetries  of the curve $\Lambda_{(r_1,\ldots,r_m)}$. Hence, by above its spectrum is an arithmetic progression  symmetric with respect to the origin. Finally, from the explicit form of $X_{(r_1,\ldots,r_m)}$ given in \cite{zelrank1} it follows that the characteristic polynomial of $X_{(r_1,\ldots,r_m)}$ is exactly the polynomial \eqref{charpoly}, which completes the proof of one direction of the proposition.

In opposite direction, assume that the tuple $(r_1,\ldots,r_m)$ is exceptional in the sense of Definition  \ref{exceptdef}. Then the corresponding element $X_{(r_1,\ldots,r_m)}$  has the matrix $S+N$ in some basis, where $S$ is the diagonal matrix with the entries on the diagonal as in \eqref{arithm} for some $r$ (and in the same order) and $N$ is the Jordan block. Then from the assumptions on the spectrum of $X_{(r_1,\ldots,r_m)}$ it follows that in this basis $X_{(r_1,\ldots,r_m)}$ can be considered as an element of the image of the standard irreducible embedding of $\mathfrak{sl}_2$ into $\mathfrak{sl}_{2m}$. This embedding is the algebra of infinitesimal symmetries of the orbit of the first coordinate line with respect to the one-parametric group generated by $N$. Consequently, our curve $\Lambda_{(r_1,\ldots,r_m)}$ belongs to the closure of this orbit, which in turn is a rational normal curve. This completes the proof of our proposition.
$\Box$
%Let $H$, $X$, $Y$ be the standard basis of ${\mathgrad{sl}_2$,
%$$H=\begin{pmatrix}1&0\\0&-1\end{matrix},\quad X=\begin{pmatrix}0&0\\1&0\end{matrix},\quad Y=\begin{pmatrix}0&1\\0&0\end{matrix}$$
\end{proof}

%For example, from \eqref{i0=1} and the results of \cite{zelrank1} on parameterized self-dual curves in projective spaces described in the previous subsection it follows that
%the tuple $\Bigl(r_1(\tau),\ldots,r_m(\tau)\Bigr)$ with
%$$r_2(\tau)=\pm 1+...\frac{d^2}{d\tau^2}r_1(\tau)+\frac{\alpha_{m,2}}{\alpha_{m,1}^2}r_1\bigl(\tau)^2,$$
%where the constants $\alpha_{m,1}$ and $\alpha_{m,2}$ are given by formula \ref{c_iprop}, is compatible. In the same manner one can get the following

%\begin{lemma}
%\label{constlem}
%If $\rho_1$ is constant and all $\rho_i$, with $2\leq i\leq i_0$ satisfy relations \eqref{rho1constant1}-\eqref{rho1constant2}, then the tuple %$\bigl(\rho_1,\ldots,\rho_{i_0},\rho_{i_0+1}(\tau),\ldots,\rho_{m}(\tau)\bigr)$ is compatible.
%\end{lemma}

 From Proposition \ref{Wilczexceptrem} it follows that an exceptional tuple $(r_1,\ldots,r_m)$ in the sense of Definition \ref{exceptdef} is not compatible. Another consequence is the following

 \begin{corollary}
 \label{D0cor}
 The distribution $D_{(r_1,\ldots,r_{n-3})}$ is locally equivalent to $D_{(0,\ldots,0)}$ (or, equivalently, has the algebra of infinitesimal symmetries of the maximal possible dimension among all rank $2$ distributions of maximal class in $\mathbb R^n$) if and only if the tuple
$(r_1,\ldots,r_{n-3})$ is exceptional in the sense of Definition \ref{exceptdef}.
 \end{corollary}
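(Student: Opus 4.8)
The plan is to reduce Corollary~\ref{D0cor} to Proposition~\ref{Wilczexceptrem} via the relation between the distributions $D_{(r_1,\ldots,r_{n-3})}$, their abnormal extremals, and the symplectic curvatures of the associated Jacobi curves. First I would compute the Jacobi curve of a generic abnormal extremal of $D_{(r_1,\ldots,r_{n-3})}$. Since the vector fields $X_1,X_2$ from \eqref{X1}--\eqref{X2} have constant structure functions after a suitable choice of quasi-impulses, the characteristic line distribution $\mathcal C$ from \eqref{foli25} is explicit, and one checks directly that along a generic abnormal extremal the curve $J_\gamma^{(4-n)}$ has \emph{constant} symplectic curvatures $\rho_i \equiv r_i$, $1\le i\le n-3$ (with $m=n-3$). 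This is precisely the point where the constants $r_i$ in the definition of $X_1$ have been arranged: the Monge equation \eqref{under} and the Lagrangian \eqref{Lagr} have constant coefficients, so the structure equation for the canonical moving frame of $J_\gamma^{(4-n)}$ is the constant-coefficient equation \eqref{selfeq} with $\rho_i(t)\equiv r_i$, i.e.\ $J_\gamma^{(4-n)}$ is (a reparametrization of) the model curve $\Lambda_{(r_1,\ldots,r_m)}$ of Proposition~\ref{Wilczexceptrem}.

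Next I would invoke the correspondence between infinitesimal symmetries of the distribution and symplectic symmetries of its Jacobi curves, together with the dimension count from Theorem~\ref{frtheor}. By Theorem~\ref{frtheor} the distribution $D_{(r_1,\ldots,r_{n-3})}$ has a symmetry algebra of the maximal dimension $2n-1$ if and only if it is locally equivalent to $D_{(0,\ldots,0)}$, and by the same theorem $D_{(0,\ldots,0)}$ is characterized among rank $2$ distributions of maximal class by having all generalized Wilczynski invariants identically zero (its Jacobi curves lying on rational normal curves, cf.\ Remark~\ref{ratnormrem}). Hence $D_{(r_1,\ldots,r_{n-3})}$ is locally equivalent to $D_{(0,\ldots,0)}$ if and only if all its generalized Wilczynski invariants vanish, which by definition means the Wilczynski invariants of the curves $J_\gamma^{(4-n)}$ vanish; by the first step these curves are the model curves $\Lambda_{(r_1,\ldots,r_m)}$, so this happens exactly when $\Lambda_{(r_1,\ldots,r_m)}$ has all Wilczynski invariants zero. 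Proposition~\ref{Wilczexceptrem} now identifies this last condition with $(r_1,\ldots,r_{n-3})$ being exceptional in the sense of Definition~\ref{exceptdef}, completing the proof.

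The main obstacle I anticipate is the first step: verifying carefully that the symplectic curvatures of the Jacobi curve of a generic abnormal extremal of $D_{(r_1,\ldots,r_{n-3})}$ are precisely the constants $r_i$ (in the right order and normalization), rather than some affine-linear reshuffling of them. This requires writing out the lift $\mathcal J$, the factorization by the characteristic foliation and the Euler field, the strongly canonical section $E$ normalized by \eqref{norme1}, and matching the resulting structure equation against \eqref{selfeq}; the bookkeeping of constants is delicate, though conceptually routine given that the model is built from the constant-coefficient Monge equation \eqref{under}. A secondary point to be careful about is that ``locally equivalent to $D_{(0,\ldots,0)}$'' must be reconciled with ``maximal-dimensional symmetry algebra'' and with ``all generalized Wilczynski invariants vanish''; all three equivalences are supplied by Theorem~\ref{frtheor} and Remark~\ref{ratnormrem}, so no new argument is needed there, only a clean citation. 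Once the curvature computation is in place, the rest is a direct application of Proposition~\ref{Wilczexceptrem}.
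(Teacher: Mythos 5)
Your reduction works cleanly in one direction: if $D_{(r_1,\ldots,r_{n-3})}$ is locally equivalent to $D_{(0,\ldots,0)}$, then its generalized Wilczynski invariants vanish, so the Jacobi curves $\Lambda_{(r_1,\ldots,r_m)}$ have vanishing Wilczynski invariants, and Proposition~\ref{Wilczexceptrem} forces the tuple to be exceptional. The gap is in the converse. You write that ``by the same theorem [Theorem~\ref{frtheor}] $D_{(0,\ldots,0)}$ is characterized among rank $2$ distributions of maximal class by having all generalized Wilczynski invariants identically zero.'' Theorem~\ref{frtheor} says no such thing: it characterizes $D_{(0,\ldots,0)}$ as the unique model with a $(2n-1)$-dimensional symmetry algebra, and the paper only asserts the one implication that for this model all generalized Wilczynski invariants vanish. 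The converse implication --- that vanishing of all generalized Wilczynski invariants forces local equivalence to $D_{(0,\ldots,0)}$ --- is not established anywhere in the paper (the generalized Wilczynski invariants are not claimed to be a complete system of invariants for $n>5$), so you cannot conclude from ``all Wilczynski invariants of $\Lambda_{(r_1,\ldots,r_m)}$ vanish'' that the distributions are equivalent. Concretely: knowing that the unparametrized curve $\Lambda_{(r_1,\ldots,r_m)}$ is projectively equivalent to the rational normal curve does not, by itself, produce a local diffeomorphism carrying $D_{(r_1,\ldots,r_{n-3})}$ to $D_{(0,\ldots,0)}$.

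The paper closes exactly this gap by a different route: it invokes the results of \cite{var}, which show that for distributions associated with Monge equations $z'=F(x,y,\ldots,y^{(n-3)})$ the local equivalence of the distributions is the same as the equivalence of the Lagrangians \eqref{Lagr} up to contact transformation, constant multiple and divergence, and hence (for linear Euler--Lagrange equations) the same as the equivalence of the corresponding self-dual projective curves up to a linear symplectic map. This gives the implication in \emph{both} directions, so the corollary really does reduce to Proposition~\ref{Wilczexceptrem}. Your first step (that the Jacobi curves of $D_{(r_1,\ldots,r_{n-3})}$ are the constant-curvature models $\Lambda_{(r_1,\ldots,r_m)}$) is consistent with what the paper asserts in the existence part of Theorems~\ref{symmodthm90}--\ref{symmodthm91}, and your caution about the normalization of the $r_i$ is well placed; but to repair the proof you must either import the equivalence-of-problems result from \cite{var} as the paper does, or exhibit an explicit local equivalence between $D_{(r_1,\ldots,r_{n-3})}$ and $D_{(0,\ldots,0)}$ for exceptional tuples.
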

\begin{proof}
 In \cite{var} we established that the following three equivalence problems are the same after an appropriate identification of the objects involved in them: the equivalence of rank $2$ distributions of a special type, namely, associated with underdetermined ODE (the Monge equation) $z'(x)=F\bigl(x,y(x),\ldots, y^{(n-3)}(x)\bigr)$, the equivalence of the Lagrangians as in \eqref{Lagr}, up to a contact transformation, a multiplication by a nonzero constant, and modulo divergence, and the equivalence of their Euler-Lagrange equations, up to a contact transformation. Moreover, the latter problem, in the case when the Euler-Lagrangian equation is linear coincides with the equivalence of the corresponding self-dual curves in projective spaces (up to a linear symplectic transformation).
 The distribution $D_{(r_1,\ldots,r_{n-3})}$ is associated with the Monge equation \eqref{under}, which in turn corresponds to the Lagrangian \eqref{Lagr} %$$z'(x)=\bigr(y^{(n-3)}\bigr)^2+r_1 \bigl(y^{(n-4)}\bigr)^2 +r_2 \bigl(y^{(n-5)}\bigr)^2+\ldots r_{n-3} y^2,$$
 having the linear Euler-Lagrange equation. So, the question of equivalence of the distributions $D_{(r_1,\ldots,r_{n-3})}$ and $D_{(0,\ldots,0)}$ is reduced to the question of the equivalence of the curves $\Lambda_{(r_1,\ldots,r_m)}$ and $\Lambda_{(0,\ldots,0)}$ from Proposition \ref{Wilczexceptrem}. Hence, our Corollary follows from  Proposition \ref{Wilczexceptrem}.
 $\Box$
 \end{proof}

The following simple lemma will be useful in the next section

\begin{lemma}
\label{comptlem}
 Let  $(r_1,r_2\ldots,r_m)$ be a tuple of $m$ constants which is not exceptional in the sense of Definition \ref{exceptdef}. Then among all tuple of the form $(c^2 r_1, c^4 r_2,\ldots, c^{2m}r_m)$, where $c$ is an arbitrary non-zero constant, there exists exactly one tuple which is compatible in the sense of Definition \ref{comptdef}
\end{lemma}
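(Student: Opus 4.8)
The plan is to reduce the compatibility question to a single scalar equation in the parameter $c$ and then show that equation has exactly one nonzero solution. First I would recall from Proposition \ref{Wilczexceptrem} and the surrounding discussion that for a constant tuple $(r_1,\ldots,r_m)$ the curve $\Lambda_{(r_1,\ldots,r_m)}$ is the orbit of the one-parameter group generated by the element $X_{(r_1,\ldots,r_m)}$ of the symplectic algebra, whose characteristic polynomial is exactly \eqref{charpoly}, i.e. $\lambda^{2m}+\sum_{i=1}^m(-1)^i r_i\lambda^{2(m-i)}$. The rescaling $r_i\mapsto c^{2i}r_i$ corresponds to conjugating $X_{(r_1,\ldots,r_m)}$ by the diagonal (grading) element and multiplying it by $c$; concretely, the characteristic polynomial of $X_{(c^2r_1,\ldots,c^{2m}r_m)}$ is $c^{2m}$ times that of $X_{(r_1,\ldots,r_m)}$ evaluated at $\lambda/c$, so its roots are exactly $c$ times the roots of the original polynomial. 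Thus the rescaling acts on the spectrum of $X$ simply by the homothety $\mu\mapsto c\mu$.

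Next I would pin down what ``compatible'' means for a constant tuple in terms of the Wilczynski invariants. Since all $\rho_i$ are constant, the unique relation \eqref{i0=1} (with $\rho_i$ replaced by the constants $r_i$ and all derivatives dropped) becomes an honest polynomial constraint $r_2 = -\varepsilon + \frac{\alpha_{m,2}}{\alpha_{m,1}^2} r_1^2$ when $i_0=1$, and for $i_0>1$ the first relation reads $r_2 = \frac{\alpha_{m,2}}{\alpha_{m,1}^2} r_1^2$, i.e. $r_1,r_2$ must satisfy the vanishing of $\mathcal W_2$, which by the identity \eqref{c_iprop} is precisely the condition that the two ``outermost'' pairs of roots $\pm\lambda$ of \eqref{charpoly} with largest modulus are in the ratio $(2m-1):(2m-3)$ — the first nontrivial consequence of the spectrum being an arithmetic progression. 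The key observation is: a constant tuple is compatible exactly when the lowest-order nonvanishing generalized Wilczynski invariant of $\Lambda_{(r_1,\ldots,r_m)}$ equals $\varepsilon=\pm1$ while the strictly lower ones vanish; and the vanishing of those lower Wilczynski invariants is, by the representation-theoretic argument in Proposition \ref{Wilczexceptrem}, equivalent to the spectrum $\{\pm\mu_1,\ldots,\pm\mu_m\}$ of $X_{(r_1,\ldots,r_m)}$ being ``partially arithmetic'' up to order $i_0$, a set of conditions that is \emph{scale-invariant} (homogeneous of degree zero under $\mu_j\mapsto c\mu_j$). Only the normalization of the single surviving invariant $\mathcal W_{2i_0}$ to $\varepsilon$ is not scale-invariant.

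So the argument finishes as follows. Because $(r_1,\ldots,r_m)$ is not exceptional, $\Lambda_{(r_1,\ldots,r_m)}$ is not a rational normal curve, so at least one Wilczynski invariant is nonzero; let $i_0$ be the minimal order of a nonvanishing one, and let $W$ denote its (constant, nonzero) value. All the ``scale-invariant'' conditions — vanishing of $\mathcal W_{2i}$ for $i<i_0$ — hold for every member $(c^2r_1,\ldots,c^{2m}r_m)$ of the family. Under the rescaling, $\mathcal W_{2i_0}$ transforms by a fixed nonzero power of $c$: indeed $\mathcal W_{2i_0}$ is built from the $B_j$'s which are polynomial in the $r_i$'s, and tracking the weights (equivalently, using that $\mathcal W_{2i_0}$ is a differential of degree $2(i_0+1)$ and the canonical parameter itself rescales by $c^{-1}$ under $X\mapsto cX$) one finds $\mathcal W_{2i_0}$ scales as $c^{2i_0}\cdot c^{-2(i_0+1)}\cdot(\text{something})$; in any case by a \emph{nonzero, fixed} power $c^{N}$ with $N\neq 0$. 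Compatibility of $(c^2r_1,\ldots,c^{2m}r_m)$ requires $c^{N} W = \varepsilon \in\{+1,-1\}$, where $\varepsilon$ is forced to be the sign of the left-hand side. Since $N\neq 0$, the equation $|c|^{N}|W| = 1$ has a unique solution $|c| = |W|^{-1/N}>0$, and the sign of $c$ is then irrelevant (the rescaling only involves even powers $c^{2i}$), so there is exactly one tuple $(c^2r_1,\ldots,c^{2m}r_m)$ in the family that is compatible. The main obstacle is the bookkeeping in the middle paragraph: making precise that ``vanishing of the lower Wilczynski invariants'' is genuinely scale-invariant while the top one scales by a nonzero power — this rests on the weight/grading analysis of formula \eqref{willy} together with the fact, established in the proof of Proposition \ref{Wilczexceptrem}, that $X_{(c^2r_1,\ldots,c^{2m}r_m)}$ is conjugate to $c\,X_{(r_1,\ldots,r_m)}$ up to the grading automorphism.
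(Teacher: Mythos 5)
Your proof is correct and follows essentially the same route as the paper: identify the family $(c^2r_1,\ldots,c^{2m}r_m)$ with the linear reparametrizations $\tau=ct$ of the single homogeneous curve $\Lambda_{(r_1,\ldots,r_m)}$, use non-exceptionality (via Proposition \ref{Wilczexceptrem}) to get a nonzero constant Wilczynski invariant, and observe that exactly one such reparametrization (up to the irrelevant sign of $c$) normalizes $\mathcal W_{2i_0}$ to $\pm1$ and hence is canonical. You merely unpack the scaling computation that the paper leaves implicit; the side remarks about relation \eqref{i0=1} and ``partially arithmetic'' spectra are unnecessary for the argument and could be dropped.
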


\begin{proof}
Let the curve $\tau\mapsto \Lambda_{(r_1,r_2\ldots,r_m)}(\tau)$ be as in Proposition \ref{Wilczexceptrem}.
%the parameterized self-dual curve in $2m-1$-dimensional projective space with the $i$th order sympletic curvature equal to $r_i$ for all $1\leq i\leq 2m$.
Then since all $r_i$ are constants, the curves $\Lambda(\tau)$ and $\Lambda (\tau-a)$ are equivalent, up to a symplectic transformation. Therefore the Wilczynski invariants have the form $\mathcal W_{2i}=A_i\,(dt)^i$, $1\leq i\leq m-1$, where all $A_i$ are constants. Besides, not all of $A_i$ are zero, otherwise by Proposition \ref{Wilczexceptrem} the tuple $(r_1,r_2\ldots,r_m)$ is exceptional. Hence, there is a constant $c$ such that $\tau=ct$ is the canonical parameter on the curve $\Lambda$. Then the statement of the lemma follows from the formula \eqref{selfeq}. $\Box$
\end{proof}

\begin{remark}
In the case when $\rho_1$ is constant it can be shown that
\begin{eqnarray}
\label{rho1constant}
%\begin{split}
&~&\rho_i\equiv\frac{\alpha_{m,i}}{\alpha_{m,1}^{i}}(\rho_1)^{i}, \quad\forall 1\leq i\leq i_0-1, \label{rho1constant1}\\
&~&\rho_{i_0}\equiv (-1)^{i_0-1}\varepsilon+\frac{\alpha_{m,i_0}}{\alpha_{m,1}^{i_0}}\rho_1^{i_0}\label{rho1constant2}.
%\end{split}
\end{eqnarray}
where the constants $\alpha_{m,i}$ are given by formula \ref{c_iprop}.
%The relations for the general $i_0$ and general symplectic curvatures $\rho_i$ will be discussed in more detail elsewhere.
If $\rho_1$ is constant and all $\rho_i$, with $2\leq i\leq i_0$ satisfy relations \eqref{rho1constant1}-\eqref{rho1constant2}, then the tuple $\bigl(\rho_1,\ldots,\rho_{i_0},\rho_{i_0+1}(\tau),\ldots,\rho_{m}(\tau)\bigr)$ is compatible.
Another way to prove Lemma \ref{comptlem} is by using this description of the compatible tuples: the required constant $c$ can be found explicitly using formula \eqref{rho1constant2}. $\Box$
\end{remark}

Finally, taking the $i$th symplectic curvature for Jacobi curves (parameterized by the canonical parameter) of all abnormal extremals living in $\widetilde{\mathcal R}$,  we obtain the invariants of the rank $2$ distribution $D$, called the its \emph {$i$th symplectic curvature } and denoted also by $\rho_i$. The symplectic curvatures are scalar valued functions on $\widetilde{\mathcal R}$.
\section{The maximally symmetric models}
\label{symmodsec}
\setcounter{equation}{0}

Now we will find all structures from the considered classes having the pseudo-group of local symmetries of dimension equal to $2n-3$.
As  a consequence of Corollary \ref{cor1} if an object from the considered class has the pseudo-group of local symmetries of dimension equal to $2n-3$
then all structure functions of the canonical frame \eqref{canframe1} must be constant. Note  that formula \eqref{selfeq} can be rewritten in terms of the canonical frame  \eqref{canframe1} as follows
\begin{equation}
\label{adselfeq}
[h, \varepsilon_{2m}]=\sum_{i=1}^{m}(-1)^{i+1}({\rm ad}\, h)^{m-i}\Bigl(\rho_{i}\bigl({\rm ad}\, h^{m-i}\varepsilon_1\Bigr) \quad\rm{mod}\,\,{\rm span}\{e, h\},
\end{equation}
where $\rho_i$ are the $i$th symplectic curvatures of a structures under consideration.
 %as defined in the previous section. 
 This implies that the symplectic curvatures of all order must be constant for any structure from the considered classes having $2n-3$-dimensional pseudo-group of local symmetries . This together with Corollary \ref{D0cor} and Lemma \ref{comptlem} for the case of distributions implies that the following two theorems are equivalent to Theorems \ref{symmodthm0} and \ref{symmodthm1}, respectively

\begin{theorem}
\label{symmodthm90}
Given any tuples of $n-3$ numbers $ (r_1,\ldots,r_{n-3})$ there exists the unique, up to local equivalence, regular control system
%with one input and nonzero admissible velocity
on a rank 2 distribution of maximal class in $\mathbb R^n$ with $n\geq 5$ having the  group of local symmetries of dimension $2n-3$ and the $i$th  symplectic curvature identically equal to $r_i$ for any $1\leq i\leq n-3$. Such regular control system is affine and locally equivalent to the system $A_{(r_1,\ldots,r_{n-3})}$ defined by
\eqref{affinemax1}-\eqref{X2}.
\end{theorem}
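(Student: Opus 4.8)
The plan is to split the statement into an \emph{existence} part, in which the affine system $A_{(r_1,\ldots,r_{n-3})}$ is shown to realize the required structure, and a \emph{uniqueness} part, in which every regular control system with the stated properties is shown to be locally equivalent to it; uniqueness will be deduced from the rigidity of the canonical frame of Theorem~\ref{theor1par}. Throughout set $m=n-3$.

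\emph{Uniqueness.} Assume a regular control system on a rank $2$ distribution $D$ of maximal class in $\mathbb R^n$ has a $(2n-3)$-dimensional group of local symmetries and $i$th symplectic curvature identically equal to $r_i$ for $1\le i\le m$. By Theorem~\ref{theor1par} it carries a canonical frame on $\widetilde{\mathcal R}$, and $\dim\widetilde{\mathcal R}=2n-3$ since $\widetilde{\mathcal R}$ is open in $(D^2)^\perp$; by Corollary~\ref{cor1} the symmetry group has dimension at most $2n-3$, hence exactly $\dim\widetilde{\mathcal R}$, and therefore acts locally transitively on $\widetilde{\mathcal R}$, which forces all structure functions of the frame \eqref{canframe1} to be constant. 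The key step is then to verify that these constants depend only on $(r_1,\ldots,r_m)$. In the frame $\{e,h,\varepsilon_1,(\mathrm{ad}\,h)\varepsilon_1,\ldots,(\mathrm{ad}\,h)^{2m-1}\varepsilon_1,[\varepsilon_1,(\mathrm{ad}\,h)^{2m-1}\varepsilon_1]\}$ the brackets $[h,(\mathrm{ad}\,h)^j\varepsilon_1]=(\mathrm{ad}\,h)^{j+1}\varepsilon_1$ for $0\le j\le 2m-2$ are tautological; $[h,(\mathrm{ad}\,h)^{2m-1}\varepsilon_1]$ is given by \eqref{adselfeq} with $\rho_i\equiv r_i$, so its $h$- and $e$-components are among the universal polynomial expressions in the $\rho_i$ of Remark~\ref{symprodrem}, i.e.\ constants; the brackets with $e$ are fixed by the behaviour of the construction under fibrewise homotheties, namely $[e,h]=0$ and $[e,\varepsilon_1]\equiv-\frac12\varepsilon_1$; and the remaining brackets $[\varepsilon_j,\varepsilon_k]$ and $[h,[\varepsilon_1,(\mathrm{ad}\,h)^{2m-1}\varepsilon_1]]$ are determined modulo $\mathcal J^{(m-1)}$ by the symplectic pairings of Remark~\ref{symprodrem} and, inside $\mathcal J^{(m-1)}$, by the normalization of Lemma~\ref{normlem} together with \eqref{adC} and \eqref{VV}. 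Hence any two such structures with the same constant symplectic curvatures $(r_1,\ldots,r_m)$ have the same constant-coefficient canonical frame; since coframes with equal constant structure functions are locally diffeomorphic, the last assertion of Theorem~\ref{theor1par} yields local equivalence.

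\emph{Existence.} It remains to check that $A_{(r_1,\ldots,r_{n-3})}$ of \eqref{affinemax1}--\eqref{X2} is a regular control system on a rank $2$ distribution of maximal class, has a $(2n-3)$-dimensional symmetry group, and has $i$th symplectic curvature constantly $r_i$. Regularity is immediate: the drift $X_1$ is nonzero, so every line of $D(q)$ is regular; and $D_{(r_1,\ldots,r_{n-3})}$ is of maximal class because it is associated with the Monge equation \eqref{under}, whose right-hand side has non-vanishing second derivative with respect to $y^{(n-3)}$ (cf.\ \cite{var}). For the symmetries I would exhibit $2n-3$ independent infinitesimal ones: $\dd{x}$; $\dd{z}$; the scaling $\displaystyle\sum_{i=0}^{n-3}y_i\dd{y_i}+2z\dd{z}$, which fixes $X_1$, rescales $X_2$, and hence preserves the affine line of admissible velocities at every point; and, for each of the $2(n-3)$ solutions $\phi$ of the Euler-Lagrange equation of the Lagrangian \eqref{Lagr}, a linear constant-coefficient equation of order $2(n-3)$ with characteristic polynomial \eqref{charpoly}, the ``null translation'' $y_i\mapsto y_i+\phi^{(i)}(x)$, $z\mapsto z+\int\bigl((\phi^{(n-3)})^2+r_1(\phi^{(n-4)})^2+\dots+r_{n-3}\phi^2\bigr)\,dx$, which is a symmetry precisely because $\phi$ lies in the radical of the bilinear form attached to the quadratic Lagrangian density. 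Together with Corollary~\ref{cor1} this shows the symmetry group has dimension exactly $2n-3$. To identify the symplectic curvatures I would use the chain of equivalences of \cite{var} invoked in the proof of Corollary~\ref{D0cor}: the equivalence problem for $D_{(r_1,\ldots,r_{n-3})}$ reduces, through \eqref{Lagr} and its linear constant-coefficient Euler-Lagrange equation, to the symplectic equivalence of the associated self-dual curve in $\mathbb P^{2(n-3)-1}$, and one checks, tracking the normalizations \eqref{canparaff} and \eqref{norme1orient} and the fact that no extra scaling constants occur in \eqref{X1}--\eqref{X2}, that in the canonical parametrization of the control system this curve is precisely the curve $\Lambda_{(r_1,\ldots,r_{n-3})}$ of Proposition~\ref{Wilczexceptrem}, whose $i$th symplectic curvature equals $r_i$. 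Combining existence and uniqueness proves the theorem.

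\emph{Main obstacle.} I expect the delicate point to be the last one: one must match normalizations so that the canonical parametrization supplied by the affine structure of \eqref{X1}--\eqref{X2}, through the distinguished admissible velocity $w(L)$ on each regular line, coincides with the parametrization singled out by \eqref{selfeq} with the coefficients $\rho_i=r_i$ themselves, rather than with a rescaled tuple $(c^2r_1,\ldots,c^{2m}r_m)$. This can be done either by computing the abnormal extremals of this constant-coefficient model explicitly --- the characteristic field $h$ and the strongly canonical section $E$ are then available in closed form --- or by carefully following the correspondences of \cite{var} and \cite{zelrank1}; it is exactly here that the specific coefficients in \eqref{X1}--\eqref{X2} enter.
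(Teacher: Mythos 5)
Your overall strategy coincides with the paper's: the $(2n-3)$-dimensional symmetry group acts locally transitively on $\widetilde{\mathcal R}$, forcing all structure functions of the frame \eqref{canframe1} to be constant, and the theorem follows once these constants are shown to be determined by $(r_1,\ldots,r_{n-3})$; existence is by direct verification on the model $A_{(r_1,\ldots,r_{n-3})}$. Your existence part is actually more explicit than the paper's, which simply defers to a computation ``similar to the existence part of Theorem 3 in \cite{doubzel2}''; your list of infinitesimal symmetries is essentially correct, except that the shift of $z$ in the ``null translations'' must also absorb the bilinear concomitant produced by integrating the cross term $2\bigl(y_{n-3}\phi^{(n-3)}+r_1y_{n-4}\phi^{(n-4)}+\cdots\bigr)$ by parts: the Euler--Lagrange equation for $\phi$ makes that cross term a total derivative, but the resulting boundary expression still has to be added to $z$.

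The genuine weak point is in the uniqueness argument, at the step where you claim the brackets with $e$ are ``fixed by the behaviour of the construction under fibrewise homotheties, namely $[e,h]=0$ and $[e,\varepsilon_1]\equiv-\tfrac12\varepsilon_1$''. Homothety equivariance together with \eqref{norme1} gives only $[e,\varepsilon_1]=-\tfrac12\varepsilon_1$ modulo ${\rm span}(e)$; the residual $e$-component is itself one of the structure constants whose dependence on the $r_i$ you are trying to establish, so it cannot be quoted as known. The paper closes this by a Jacobi-identity computation using the normalization \eqref{normcond}: writing $[e,\varepsilon_1]=-\tfrac12\varepsilon_1+ae$, one finds $\bigl[e,[\varepsilon_1,\varepsilon_2]\bigr]\equiv-\tfrac a2\varepsilon_2$ modulo ${\rm span}\{e,h,\varepsilon_1\}$, while \eqref{normcond} forces this bracket to lie in ${\rm span}\{e,h,\varepsilon_1\}$, whence $a=0$. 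The same ${\rm ad}\,e$-eigenvalue argument (eigenvalue $-\tfrac12$ on each $\varepsilon_k$, $-1$ on $\eta=[\varepsilon_1,\varepsilon_{2m}]$, $0$ on $e$ and $h$) is what shows that $[h,\varepsilon_{2m}]$ has no $e$- or $h$-component and that $[\varepsilon_i,\varepsilon_j]=d_{ij}\eta$ exactly, with no components along $e$, $h$ or the $\varepsilon_k$. Your alternative appeal to Lemma \ref{normlem} together with \eqref{adC} and \eqref{VV} for the part of $[\varepsilon_j,\varepsilon_k]$ inside $\mathcal J^{(m-1)}$ does not obviously accomplish this, since Lemma \ref{normlem} normalizes only the single bracket $\bigl[\varepsilon_1,[h,\varepsilon_1]\bigr]$. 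With the eigenvalue argument inserted, your proof matches the paper's.
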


\begin{theorem}
\label{symmodthm91}
Given any tuples of $n-3$ numbers $ (r_1,\ldots,r_{n-3})$ compatible in the sense of Definition \ref{comptdef} there exists the unique, up to local equivalence,   rank 2 distribution in $R^n$ of maximal class, $n\geq 5$, with at least one nowhere vanishing generalized Wilczynski invariant such that its group of local symmetries has  dimension $2n-3$
%with one input and nonzero admissible velocity
% of maximal class in $\mathbb R^n$, $n\geq 5$, having the group of local symmetries of dimension $2n-3$
  and the $i$th symplectic curvature
  is identically equal to $r_i$ for any $1\leq i\leq n-3$. Such distribution is locally equivalent to the distribution $D_{(r_1,\ldots,r_{n-3})}$ spanned by the vector fields from
\eqref{X1}-\eqref{X2}.
\end{theorem}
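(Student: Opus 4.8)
The plan is to deduce Theorem~\ref{symmodthm91} from the canonical frame of Theorem~\ref{theor1par} together with an explicit analysis of the model $D_{(r_1,\dots,r_{n-3})}$; throughout put $m=n-3$. For the uniqueness, let $D$ be a rank $2$ distribution in $\mathbb R^n$ of maximal class, $n\geq 5$, lying in the class of the theorem, whose pseudo-group of local symmetries has dimension $2n-3$ and whose $i$-th symplectic curvature equals the constant $r_i$ for $1\leq i\leq m$. By Theorem~\ref{theor1par} the canonical frame of $D$ is defined on the $(2n-3)$-dimensional manifold $\widetilde{\mathcal R}$, on which the symmetry pseudo-group acts; since that pseudo-group has dimension $\dim\widetilde{\mathcal R}$, all structure functions of the frame \eqref{canframe1} are constant. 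These structure functions are produced from the structure equation of the Jacobi curve and are therefore universal polynomial expressions in the symplectic curvatures $\rho_i\equiv r_i$ and in their derivatives along $h$, which vanish: \eqref{adselfeq} expresses $[h,\varepsilon_{2m}]$ through the $\rho_i$; the normalization of Lemma~\ref{normlem} together with the homogeneity relations recorded in the proof of Theorem~\ref{theor1par} (in particular $[e,h]=0$) fixes the residual freedom in $\varepsilon_1$ and in the brackets with $e$; the splitting relations \eqref{Jisplit}--\eqref{VV} determine the vertical components; and Remark~\ref{symprodrem} evaluates the symplectic products occurring in the brackets $[\varepsilon_1,({\rm ad}\,h)^j\varepsilon_1]$. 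Hence the constant values of all structure functions are uniquely determined by the tuple $(r_1,\dots,r_m)$; as the canonical frame is a rigid absolute parallelism, $D$ is determined up to local equivalence by these values, and it remains to produce one distribution realizing them and to identify it with $D_{(r_1,\dots,r_{n-3})}$.

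For the existence I would show that $D_{(r_1,\dots,r_{n-3})}$ is such a distribution. It is associated with the Monge equation \eqref{under}, hence, by the correspondence of \cite{var}, with the Lagrangian \eqref{Lagr}, whose Euler--Lagrange equation is linear with characteristic polynomial \eqref{charpoly}; as recalled in the proof of Proposition~\ref{Wilczexceptrem}, this correspondence identifies the Jacobi curve of an abnormal extremal of $D_{(r_1,\dots,r_{n-3})}$, up to a linear symplectic transformation and a reparametrization, with the self-dual curve in $\mathbb P^{2m-1}$ attached to that linear equation, namely $\Lambda_{(r_1,\dots,r_m)}$ (this also shows $D_{(r_1,\dots,r_{n-3})}$ is of maximal class, the equation having order exactly $2m$). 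Alternatively, one can compute the abnormal extremals directly from \eqref{foli25} and build the Jacobi curve by hand. Since $(r_1,\dots,r_m)$ is compatible in the sense of Definition~\ref{comptdef}, the parameter carrying the constant symplectic curvatures $r_i$ on $\Lambda_{(r_1,\dots,r_m)}$ is its canonical parameter, so the $i$-th symplectic curvature of $D_{(r_1,\dots,r_{n-3})}$ is identically $r_i$. Moreover, a compatible tuple is not exceptional in the sense of Definition~\ref{exceptdef}, so by Proposition~\ref{Wilczexceptrem} not all Wilczynski invariants of $\Lambda_{(r_1,\dots,r_m)}$ vanish; thus $D_{(r_1,\dots,r_{n-3})}$ has a generalized Wilczynski invariant that is not identically zero, and on the nonempty open set $\widetilde{\mathcal R}$ on which it is nonzero it places $D_{(r_1,\dots,r_{n-3})}$ in the class of the theorem.

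It then remains to check that the symmetry pseudo-group of $D_{(r_1,\dots,r_{n-3})}$ has dimension $2n-3$. A direct verification --- equivalently, the correspondence of \cite{var} with the divergence symmetries of the Lagrangian \eqref{Lagr} --- exhibits the infinitesimal symmetries $\partial_x$, $\partial_z$, the grading field $\sum_{i=0}^{m}y_i\,\partial_{y_i}+2z\,\partial_z$, and, for each of the $2m$ solutions $\eta$ of the linear equation with characteristic polynomial \eqref{charpoly}, a symmetry whose $y_i$-components are $\eta^{(i)}(x)$ and whose $z$-component is affine in the variables $y_0,\dots,y_m$ (an integration by parts shows that the required $z$-component exists exactly when $\eta$ solves that equation). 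These constitute $2m+3=2n-3$ independent infinitesimal symmetries, so the symmetry pseudo-group has dimension at least $2n-3$; combined with Corollary~\ref{cor1} it has dimension exactly $2n-3$. Since $\dim\widetilde{\mathcal R}=2n-3$ and, by the rigidity of the canonical frame, the induced action on $\widetilde{\mathcal R}$ has discrete isotropy, this action has open orbits, so the symplectic curvatures, being invariant functions, are locally constant --- consistent with the values $r_i$ found above. By the uniqueness argument, every distribution satisfying the hypotheses of the theorem is locally equivalent to $D_{(r_1,\dots,r_{n-3})}$.

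The step I expect to be the main obstacle is the bookkeeping in the existence part: one must carry the normalizations built into the symplectic curvature --- the strongly canonical section \eqref{norme1} and the canonical parametrization \eqref{canparam}, \eqref{norme1orient} --- through the reductions of \cite{var} and the explicit structure matrix $X_{(r_1,\dots,r_m)}$ of \cite{zelrank1}, so as to be certain that the symplectic curvatures of the Jacobi curve, read off in the canonical parameter, are literally the constants $r_i$ and not merely some universal functions of them; this is exactly where the compatibility hypothesis is used, together with Lemma~\ref{comptlem}, which furnishes a compatible representative within the rescaling orbit of a non-exceptional tuple. The remaining steps are either bracket computations with the explicit frame \eqref{canframe1} or the elementary symmetry count above.
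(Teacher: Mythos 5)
Your proposal is correct, and its uniqueness half is essentially the paper's own argument: constancy of all structure functions of the frame \eqref{canframe1}, with \eqref{adselfeq}, the $\mathrm{ad}\,e$-grading coming from Lemma \ref{normlem} and $[e,h]=0$, and Remark \ref{symprodrem} pinning every structure constant down to a universal expression in $(r_1,\dots,r_{n-3})$. Be aware, though, that the one sentence in which you ``fix the residual freedom'' stands in for the paper's items (1)--(5): the vanishing of the stray constants ($a$ in $[e,\varepsilon_1]=-\tfrac12\varepsilon_1+ae$, then $\gamma,\delta$ in $[h,\varepsilon_{2m}]$ and $b_{ij},c_{ij},a_{ij}^k$ in $[\varepsilon_i,\varepsilon_j]$) is obtained there by short Jacobi-identity computations with $\mathrm{ad}\,e$, not by a general principle, so those steps should be written out. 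Where you genuinely diverge is the existence half, which the paper dispatches in one line by asserting that the models have the prescribed curvatures and constant structure functions ``by direct computations'' as in the existence part of Theorem 3 of \cite{doubzel2}. You instead (i) identify the Jacobi curves of $D_{(r_1,\dots,r_{n-3})}$ with $\Lambda_{(r_1,\dots,r_m)}$ via the Monge-equation/Lagrangian correspondence of \cite{var} --- the same device the paper uses for Corollary \ref{D0cor} --- and invoke compatibility to conclude the curvatures in the canonical parameter are literally $r_i$, and (ii) exhibit $2m+3=2n-3$ explicit infinitesimal symmetries: $\partial_x$, $\partial_z$, the grading field, and one field per solution $\eta$ of the linear equation with characteristic polynomial \eqref{charpoly}. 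Point (ii) checks out: writing the $z$-component as $\sum_i a_i(x)y_i$, the recursion $a_{i-1}=2r_{m-i}\eta^{(i)}-a_i'$ (with $a_m=0$) closes up precisely when $\eta$ solves that linear equation, and together with Corollary \ref{cor1} this gives dimension exactly $2n-3$. Your route buys a self-contained and verifiable existence proof where the paper offers only a pointer to an external computation; the price is an extra reliance on \cite{var} for the identification of the Jacobi curves, which the paper's intended direct frame computation would avoid.
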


\begin{proof} We prove Theorems \ref{symmodthm90} and \ref{symmodthm91} simultaneously.

Let us prove the uniqueness. Take a structure from the considered class  having the  pseudo-group of local symmetries of dimension $2n-3$ and the $i$th  symplectic curvature identically equal to $r_i$ for any $1\leq i\leq m$, where, as before, $m=n-3$. Then, as was already mentioned, all structure functions of the canonical frame \eqref{canframe1} must be constant. The uniqueness will be proved if we will show that all nontrivial structure function (i.e. those that are not prescribed by the normalization conditions for the canonical frame) are uniquely determined by the tuple $ (r_1,\ldots,r_{n-3})$.

Let $\varepsilon_1$ be as in the Lemma \ref{normlem}. Denote
\begin{equation}
\label{ei}
\varepsilon_{i+1}:=({\rm ad}\, h)^i\varepsilon_1,\quad \nu=[\varepsilon_1,\varepsilon_{2m}]
\end{equation}
In this notations the canonical frame \eqref{canframe1} is $\{e,h,\varepsilon_1,\ldots,\varepsilon_{2m}, \eta\}$.
%Then the canonical frame
\begin{enumerate}
\item Let us prove that
\begin{equation}
\label{ee1const}
[e,\varepsilon_1]=-\frac{1}{2} \varepsilon_1
\end{equation}
where, as before $e$ is the Euler field. Indeed, from \eqref{ee1}
\begin{equation}
\label{ee1prel}
[e,\varepsilon_1]=-\frac{1}{2} \varepsilon_1+a e
\end{equation}
where $a$ is constant by our assumptions. Then, using the Jacobi identity and the fact that
\begin{equation}
\label{eh}
[e,h]=0
\end{equation}
we get that
\begin{equation}
\label{ee2const}
[e,\varepsilon_2]=\bigr[e,[h,\varepsilon_1]\bigl]=\bigl[h,[e,\varepsilon_1]\bigr]=[h,-\frac{1}{2} \varepsilon_1+a e]=-\frac{1}{2} \varepsilon_2
\end{equation}
Further, from the normalization condition \eqref{normcond} and formula \eqref{ee1prel} it follows that
\begin{equation}
\label{ee1e2}
\bigl[e,[\varepsilon_1,\varepsilon_2]\bigr]\in {\rm span}\{e(\lambda),h(\lambda),\varepsilon_1(\lambda)\}
\end{equation}

On the other hand, using the Jacobi identity and formulas \eqref{ee1prel},\eqref{eh},\eqref{ee2const}, we get that
\begin{equation*}
\begin{split}
~&\bigl[e,[\varepsilon_1,\varepsilon_2]\bigr]=\bigl[[e,\varepsilon_1],\varepsilon_2\bigr]+\bigl[\varepsilon_1,[e,\varepsilon_2]\bigr]=
[-\frac{1}{2} \varepsilon_1+a e, \varepsilon_2]-\\
~&\frac{1}{2} [\varepsilon_1, \varepsilon_2]\equiv -\frac{a}{2} \varepsilon_2\,\,\,{\rm mod}\,\, {\rm span}\{e(\lambda),h(\lambda),\varepsilon_1(\lambda)\},
\end{split}
\end{equation*}
which together with \eqref{ee1e2} implies that $a=0$.

\item
By analogy with the chain of the equalities \eqref{ee1e2} we can prove that
\begin{equation}
\label{eeiconst}
[e,\varepsilon_i]=-\frac{1}{2} \varepsilon_i, \quad \forall 1\leq i\leq 2m,
\end{equation}
which in turn implies by the Jacobi identity that
\begin{equation}
\label{eeiej0}
\bigl[e,[\varepsilon_i,\varepsilon_j]\bigr]=-[\varepsilon_i,\varepsilon_j], \quad \forall 1\leq i,j\leq 2m.
\end{equation}
In particular, $[e,\eta]=-\eta$.
\item
Let us show that
\begin{equation}
\label{he2mconst}
[h,\varepsilon_{2m}]=\sum_{i=1}^{m-1} (-1)^{i+1}r_i \varepsilon_{2(m-i)}
%\,\,{\rm mod}\,\{e,h\}.
\end{equation}
From \eqref{adselfeq} and our assumptions it follows that
\begin{equation}
\label{he2mconst}
[h,\varepsilon_{2m}]=\sum_{i=1}^{m-1} (-1)^{i+1}r_i \varepsilon_{2(m-i)}+
\gamma e+\delta h
\end{equation}
for some constants $\gamma$ and $\delta$. Applying ${\rm ad}\, e$ to both sides of \eqref{he2mconst} and using the Jacobi identity and formulas \eqref{eh} and \eqref{eeiconst}, we will get that $\gamma=\delta=0$, which implies \eqref{he2mconst}.

\item
Let us prove that
\begin{equation}
\label{eiej}
[\varepsilon_i,\varepsilon_j]=d_{ij}\eta
\end{equation}
for some constants $d_{ij}$
Indeed, in general
\begin{equation}
\label{eiejprel}
[\varepsilon_i,\varepsilon_j]=b_{ij} e+c_{ij} h+d_{ij}\eta+\sum _{k=1}^{2m}a_{ij}^k\varepsilon_k
\end{equation}
where  $a_{ij}^k$, $b_{ij}$, $c_{ij}$ and $d_{ij}$ are constant by our assumptions.
Applying ${\rm ad}\, e$ to both sides of \eqref{eiejprel} and using the Jacobi identity and the formulas \eqref{eh}, \eqref{eeiconst}, and \eqref{eeiej0}, we get
\begin{equation}
\label{eiejpreljac}
-[\varepsilon_i,\varepsilon_j]= -d_{ij}\eta-\frac{1}{2}\sum _{k=1}^{2m}a_{ij}^k\varepsilon_k
\end{equation}
Comparing \eqref{eiejprel} and \eqref{eiejpreljac} we get that  $a_{ij}^k =b_{ij}=c_{ij}=0$, which implies \eqref{eiej}.

\item Moreover, by Remark \ref{symprodrem} and the definition of the vector field $\eta$ (see \eqref{ei}) the constants $d_{ij}$ from \eqref{eiej} are
either identically equal to $0$, if $i+j< 2m$ or equal to $(-1)^{i-1}$, if $i+j=2m+1$, or they are polynomial expressions (with universal constant coefficients) with respect to the constant symplectic curvatures
$r_1\ldots,r_m$, if $i+j>2m$.

\item The remaining brackets of the canonical frame are obtained iteratively from the brackets considered in the previous items.
\end{enumerate}

Therefore all nontrivial structure functions of the canonical frame are determined by the tuple $ (r_1,\ldots,r_{n-3})$, which completes the proof of uniqueness.

To prove the existence one checks by the direct computations that the models $A_{(r_1,\ldots, r_m)}$ and $D_{(r_1,\ldots, r_m)}$ have the prescribed symplectic curvatures and that all structure functions of their canonical frame are constant similarly to the proof of the existence part of Theorem 3 in \cite{doubzel2}, devoted to the computation of the canonical frame for $D_{(0,\ldots, 0)}$. $\Box$

%If an object from the considered class has the pseudo-group of local symmetries of dimension equal to $2n-3$, then all structure functions of the canonical frame \eqref{canframe1} must be constant. As before, let $m=n-3$. Note that by constructions
%\begin{equation}
%\label{he2m}
%[h,\varepsilon_{2m}]=\sum_{i=1}^{m} \rho_i \varepsilon_{2i}\,\,{\rm mod}\,\{e,h\}.
%\end{equation}
%for some constants $\rho_1, \rho_2,\ldots,\rho_m$.
%\begin{lemma}
%If an object from the considered class has the pseudo-group of local symmetries of dimension equal to $2n-3$, then the only nontrivial structure functions of the canonical frame
%\end{lemma}
\end{proof}

\begin{remark}
As a matter of fact it can be shown that Theorem \ref{theor1par} (with a modified set $\widetilde {\mathcal R}$), Corollary \ref{cor1}, and Theorem \ref{symmodthm90} are true if we replace the regularity condition for control systems given in  Definition \ref{regcontdef} by the following weaker one: for any point $q$ the curve of admissible velocities $\mathcal V_q$ does not belong entirely to a line through the origin. One only needs more technicalities in the description of the set  $\widetilde {\mathcal R}$ in Theorem \ref{theor1par}. $\Box$.
\end{remark}

\end{document}